\newcommand{\defeq}{\colonequals}
\newtheorem{mydef}{Definition}
\newtheorem{thm}{Theorem}
\newtheorem*{mainthm*}{Main Theorem}
\newtheorem{prop}{Proposition}
\newtheorem{lemma}{Lemma}
\DeclareMathOperator{\Mod}{Mod}
\DeclareMathOperator{\PB}{PB}
\DeclareMathOperator{\B}{B}
\DeclareMathOperator{\PMod}{PMod}
\DeclareMathOperator{\Conf}{Conf}
\DeclareMathOperator{\PSL}{PSL}
\DeclareMathOperator{\diam}{diam}
\DeclareMathOperator{\id}{id}
\DeclareMathOperator{\rank}{rank}
\numberwithin{thm}{section} 
\numberwithin{prop}{section} 
\numberwithin{lemma}{section} 
\numberwithin{mydef}{section}
\title{Least dilatation of pure surface braids}
\author{Marissa Loving \\ with an appendix by Marissa Loving and Hugo Parlier}					
\date{}
\begin{document}

\maketitle 

\begin{abstract}
We study the minimal dilatation of pseudo-Anosov pure surface braids and provide upper and lower bounds as a function of genus and the number of punctures. For a fixed number of punctures, these bounds tend to infinity as the genus does. We also bound the dilatation of pseudo-Anosov pure surface braids away from zero and give a constant upper bound in the case of a sufficient number of punctures. 
\end{abstract}

\section{Introduction}

Let $S_{g,n}$ be a surface of genus $g \geq 2$ with $n \geq 1$ punctures and let $S_g = S_{g, 0}$. We define the {\bf mapping class group} of $S_{g,n}$, denoted by $\Mod(S_{g,n})$, to be the group of orientation preserving homeomorphisms of $S_{g,n}$ up to isotopy. The {\bf pure mapping class group} of $S_{g,n}$, denoted $\PMod(S_{g,n})$, is the subgroup of $\Mod(S_{g,n})$ that fixes each puncture pointwise. 

Consider the following short exact sequence $$1 \longrightarrow \ker({\mathcal Forget}) \longrightarrow \PMod(S_{g,n}) \longrightarrow \Mod(S_{g}) \longrightarrow 1,$$ where ${\mathcal Forget}: \PMod(S_{g,n}) \to Mod(S_g)$ is the forgetful map obtained by ``filling in" the $n$ punctures of $S_{g,n}$. The {\bf $n$-stranded pure braid group} of a surface of genus $g$ is defined as the kernel of this map ${\mathcal Forget}$, denoted $\PB_n(S_g).$ This is isomorphic to the fundamental group of the configuration space of ordered $n$-tuples of points on $S_g$; see Section \ref{subsection:surfacebraids} for further discussion.

Given a {\bf pseudo-Anosov} mapping class $f \in \PB_n(S_g)$ we denote its {\bf dilatation} by $\lambda(f)$ and its {\bf entropy} by $\log( \lambda(f))$, which is indeed the topological entropy of the pseudo-Anosov representative of $f$. In particular, we will be interested in the least entropy \[L(\PB_n(S_g)) \defeq \inf \{\log(\lambda(f)) \mid f \in \PB_n(S_g) \text{ is pseudo-Anosov}\}.\]

\begin{mainthm*} For a surface $S_{g,n}$ of genus $g \geq 2$ with $n \geq 1$ punctures there exist constants $c, c' > 0$ such that, \[c \log \left( \left \lceil \frac{\log g}{n} \right \rceil \right) + c \leq L(\PB_n(S_g)) \leq c' \log \left(\left \lceil \frac{g}{n} \right \rceil \right) + c'.\]
\end{mainthm*}

Explicit values for $c$ and $c'$ are obtained from the bounds given in Theorem \ref{thm:upperBound}, Theorem \ref{thm:lowerbound}, and Theorem \ref{thm:infty}.

To put the Main Theorem in context, we recall the results of Penner \cite{penner91} and Tsai \cite{tsai09}, which give bounds on the least entropy in the whole mapping class group (Penner for closed surface and Tsai for punctured surfaces). In particular, Penner's result shows that $L(\Mod(S_g))$ goes to $0$ as $g$ tends to infinity and Tsai's result shows that, for fixed genus $g$, $L(\Mod(S_{g,n}))$ goes to $0$ as $n$ tends to infinity. These both contrast sharply with the behavior of the least entropy in the pure surface braid group demonstrated by the Main Theorem, which shows that, for a fixed number of punctures $n$, $L(\PB_n(S_g))$ goes to infinity as $g$ tends to infinity. 

\begin{thm}[Penner] \label{thm:penner} For a surface $S_g$ of genus $g \geq 2$, \[ \frac{\log 2}{12g -12} \leq  L(\Mod(S_g)) \leq \frac{\log 11}{g}.\] \end{thm}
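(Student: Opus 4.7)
My approach follows the train track strategy. Given a pseudo-Anosov $f \in \Mod(S_g)$ with dilatation $\lambda$, the stable measured foliation is carried by an $f$-invariant train track $\tau$ on $S_g$, and the induced action on the edge weights of $\tau$ is encoded by a non-negative integer matrix $M$ whose Perron-Frobenius eigenvalue equals $\lambda$. First I would bound the number $N$ of branches of $\tau$ in terms of $g$: an Euler characteristic count for the complementary regions of a recurrent, transversely recurrent train track on $S_g$ yields $N \leq 12g - 12$. Second, I would invoke the elementary algebraic fact that a non-negative integer matrix of size $N$ with spectral radius $\lambda > 1$ satisfies $\lambda^N \geq 2$: the characteristic polynomial is monic with integer coefficients, so the product of the absolute values of the eigenvalues is a positive integer, and genuineness of the expansion (ruling out the cyclotomic case) forces this product to be at least $2$. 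Combining the two steps gives $\log \lambda \geq \tfrac{\log 2}{12g-12}$.

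\textbf{Plan for the upper bound.} Here I would produce an explicit family of pseudo-Anosov maps on $S_g$ using Thurston's construction. Take a pair of multicurves $A$ and $B$ on $S_g$ that fill the surface; then any composition of the form $T_A^a \circ T_B^{-b}$ with $a, b > 0$ (where $T_A$ is the product of Dehn twists along the components of $A$) is pseudo-Anosov, and its dilatation is the leading eigenvalue of an explicit matrix built from the geometric intersection data of $A$ and $B$. The task is to exhibit, for each $g$, a filling pair $(A,B)$ on $S_g$ and exponents $a, b$ so that this Perron-Frobenius eigenvalue is at most $11^{1/g}$. A natural candidate is a chain of $2g+1$ nonseparating curves together with a carefully chosen dual curve, giving a matrix whose characteristic polynomial can be analyzed directly; alternatively, one can pull back a small-dilatation pseudo-Anosov on a low-genus base through a degree-$g$ cover, and check that the dilatation scales like $\lambda^{1/g}$. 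Either way the result is a pseudo-Anosov $f_g$ with $\log \lambda(f_g) \leq \tfrac{\log 11}{g}$.

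\textbf{Main obstacle.} The qualitative picture (matrix-norm lower bound via train tracks, explicit Thurston construction for the upper bound) is fairly robust, but the sharp numerical constants $12g-12$ and $11$ are delicate. On the lower-bound side, the step that requires the most care is upgrading the naive bound $\lambda^N \geq 1$ to $\lambda^N \geq 2$, which rests on the aperiodicity of the Perron-Frobenius matrix coming from a pseudo-Anosov; one must rule out the case in which $M$ has characteristic polynomial a product of cyclotomics, using that $\lambda > 1$. On the upper-bound side, the obstacle is combinatorial: one must exhibit a filling system on $S_g$ whose incidence matrix actually produces a Perron root no larger than $11^{1/g}$, which requires either an inductive construction varying with $g$ or an explicit cover argument, followed by a computation (or bound) on the characteristic polynomial of the resulting matrix.
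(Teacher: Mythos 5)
This theorem is not proved in the paper at all --- it is quoted from Penner \cite{penner91} --- so your proposal has to be measured against Penner's own argument, and as a plan it has gaps in both halves. For the lower bound, your skeleton (invariant train track or Markov partition, integral transition matrix of size linear in $g$ whose Perron root is $\lambda$) is the right one, but neither quantitative step is justified as stated. An Euler characteristic count for a filling trivalent train track on closed $S_g$ gives up to $18g-18$ branches ($12g-12$ is the number of \emph{switches}), so the bound $N \le 12g-12$ does not follow from the count you describe; Penner obtains $12g-12$ by bounding the total number of prongs of the invariant foliation (each singularity has at least $3$ prongs and the prong defect satisfies $\sum (p_i-2)=4g-4$) and producing a transition matrix of that size. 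More seriously, your justification of $\lambda^N \ge 2$ is incorrect: the product of the absolute values of all eigenvalues is $|\det M|$, which can be $0$ or $1$ even when $\lambda>1$ (the Fibonacci matrix has determinant $-1$), and excluding cyclotomic factors does not force it to be at least $2$ (consider $x^2-3x+1$). The true statement is Penner's lemma: an irreducible non-negative integer matrix that is not a permutation matrix has a diagonal entry of some power $M^{j}$, $j \le N$, equal to at least $2$, whence $\lambda^{N} \ge 2$; this is proved by counting paths in the associated directed graph, not by a determinant or Mahler-measure argument.

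The upper-bound plan fails outright as written. Any map $T_A^{a} T_B^{-b}$ with $a,b\ge 1$ coming from Thurston's construction has $\rho$-image of trace $2+ab\mu$ with $\mu \ge 1$, so its dilatation is at least $\frac{3+\sqrt{5}}{2}$ uniformly in $g$; no choice of filling pair $(A,B)$ and exponents can reach $11^{1/g}$, which tends to $1$. Your proposed alternative --- pulling back a small-dilatation pseudo-Anosov through a degree-$g$ cover --- rests on a false premise: a lift of a pseudo-Anosov to a finite cover has the \emph{same} dilatation, not a $g$-th root of it. The missing idea in Penner's construction is a finite-order symmetry: realize $S_g$ with a $\mathbb{Z}/g$ rotation $\rho_g$, take curves $a,b$ in a fundamental domain of the rotation, and set $f=\rho_g \circ T_{a}T_{b}^{-1}$; then $f^{g}$ is a Penner/Thurston-type product of $2g$ twists whose dilatation can be bounded explicitly (by roughly $11$), and $\lambda(f)=\lambda(f^{g})^{1/g}$ yields $\log\lambda(f) \le \frac{\log 11}{g}$. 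Without this rotation trick, or some other mechanism that extracts a $g$-th root of a bounded dilatation, the construction you outline cannot produce the stated upper bound.
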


Penner's bounds have been improved by many authors; see Aaber--Dunfield \cite{aaber10}, Bauer \cite{bauer92},  Hironaka \cite{hironaka10}, Hironaka--Kin \cite{hirokin06}, and Kin--Takasawa \cite{kin13}. A version of Theorem \ref{thm:penner} was proved for the case of punctured surfaces by Tsai in \cite{tsai09}. 

\begin{thm}[Tsai] \label{thm:tsai} For any fixed $g \geq 2$, there is a constant $c_g \geq 1$ depending on $g$ such that \[ \frac{\log n}{c_g n} < L(\Mod(S_{g,n})) < \frac{c_g \log n}{n},\] for all $n \geq 3$. \end{thm}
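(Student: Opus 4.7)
I would prove the upper and lower bounds of Tsai's theorem separately.

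For the upper bound, the plan is to construct an explicit family of pseudo-Anosov mapping classes $f_n \in \Mod(S_{g,n})$ achieving $\log \lambda(f_n) \leq c_g \log n / n$. A natural candidate is a Penner-type construction using two filling multicurves on $S_{g,n}$ whose incidence graph encodes a chain of $\Theta(n)$ small loops surrounding consecutive punctures. A positive product of Dehn twists along these multicurves is pseudo-Anosov by Penner's theorem, and its dilatation is bounded by the Perron--Frobenius eigenvalue of the associated incidence matrix. Arranging the multicurves so that this matrix has size $\Theta(n)$ with uniformly bounded row sums makes its spectral radius grow polynomially in $n$, and composing with a cyclic permutation of the punctures extracts an $n$-th root of the twist word to produce the required decay. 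A more conceptual alternative is to realise $S_{g,n}$ as a fiber of a fixed hyperbolic $3$-manifold fibering over $S^1$ in many ways and appeal to Fried's theorem on fibered faces, which gives $|\chi(S_{g,n})| \log \lambda = O(1)$ along any fibered face.

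For the lower bound, the plan is to exploit the fact that the $n$ punctures are distinguished periodic points of $f$. A standard application of the Markov partition associated to a pseudo-Anosov gives the estimate $\#\mathrm{Fix}(f^k) \leq C \lambda(f)^k$ for a constant $C$ depending on $g$ (and, a priori, on the singularity data of the invariant foliation). Letting $\sigma \in S_n$ be the permutation of the punctures induced by $f$, the key move is to choose a power of $f$ that fixes many punctures while keeping the exponent small: if $\sigma$ has a cycle of length $c$, then $f^c$ fixes at least $c$ punctures, yielding $\log \lambda(f) \geq (\log c - \log C)/c$. A case analysis on whether $\sigma$ has a cycle of length at least $\sqrt{n}$ or only short cycles (in which case pigeonhole produces many punctures sharing a common small period, which one feeds into the same inequality) then produces the claimed bound $\log \lambda(f) \geq \log n / (c_g n)$.

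The main obstacle is controlling the dependence of the constant $C$ on $n$. A direct count via the Lefschetz formula gives $C$ proportional to the number of prongs of the invariant foliation, which grows linearly in $n$ through the Euler--Poincar\'{e} formula; left unchecked this weakens the lower bound below the target. Overcoming it requires isolating the puncture contribution to the Lefschetz count from the interior singularities, using that the prong structure at each puncture is constrained in fixed genus, and then combining this refinement with the cycle-structure case analysis above. Maintaining uniformity of the constants in $n$ across all possible cycle types of $\sigma$ is where I expect the bulk of the technical work to lie.
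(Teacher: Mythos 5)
First, a point of comparison: the paper does not prove this statement at all --- it is quoted from Tsai \cite{tsai09} (with constants later improved by Yazdi \cite{yazdi17}) --- so there is no internal argument to measure your proposal against; it has to be judged against the known proofs. Your upper-bound sketch is in the right spirit (an explicit family whose $n$-th power is a controlled twist word about curves encircling consecutive punctures), but as written it is internally inconsistent: an incidence matrix of size $\Theta(n)$ with \emph{uniformly bounded row sums} has spectral radius $O(1)$, and extracting an $n$-th root of a bounded-dilatation word would give $\log\lambda \le C/n$, which is impossible --- it would violate the lower bound of the very theorem you are proving. The correct constructions need $\lambda(f^n)$ to grow \emph{polynomially} in $n$, and pseudo-Anosovity of the root $f=\text{(rotation)}\circ\text{(twist)}$ is not covered by Penner's or Thurston's criterion applied to the twist word; one has to build an invariant train track for $f$ itself. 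The ``conceptual alternative'' via Fried's theorem also fails as stated: the bound $|\chi(F)|\log\lambda(F)=O(1)$ holds only on compact subsets of a fibered face, and for fixed $g$ the fibrations with $n\to\infty$ punctures must exit every compact subset (otherwise one would again get $L(\Mod(S_{g,n}))=O(1/n)$, contradicting the lower bound), so no uniform constant comes out of that argument without substantial extra work near the boundary of the face.

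The decisive gap, however, is in the lower bound, and it is the one you flag yourself but do not close. The entire cycle-structure argument rests on a fixed-point estimate of the form $\#\mathrm{Fix}(f^k)\le C\lambda^k$ with $C=C(g)$ independent of $n$; with $C$ of order $g+n$ (which is what Markov-partition or Lefschetz counting actually gives, since the number of rectangles, train-track branches, or singularities is of order $|\chi(S_{g,n})|$), the inequality $\log\lambda \ge (\log c-\log C)/c$ is vacuous exactly in the critical case $c\asymp n$, and the short-cycle/pigeonhole case is no better, since $\sqrt{n}\le C\lambda^{\ell}$ carries no information once $C\gtrsim\sqrt{n}$. Your proposed remedy --- ``the prong structure at each puncture is constrained in fixed genus'' --- rests on a false premise: by the Euler--Poincar\'e relation $\sum_s (p_s-2)=4g-4$, in which $1$-pronged punctures contribute $-1$, \emph{all} $n$ punctures may be $1$-pronged provided there are roughly $n$ compensating interior $3$-pronged singularities, so for fixed $g$ the total singularity and prong data (hence the na\"ive constant $C$) genuinely grows linearly in $n$; moreover $1$-pronged fixed points have Lefschetz index $0$ or $+1$, so their contribution cannot be isolated from the Lefschetz count in the way you hope. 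This is precisely where the real content of the lower bound lies, and the published arguments handle it by a different mechanism --- controlling the growth of complexity (intersection numbers of arcs and curves through the punctures under iteration, compared against $\lambda^k$) rather than raw periodic-point counts --- so as it stands your plan is a program with its hardest step unresolved rather than a proof.
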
 

The constants $c_g$ in Tsai's result were improved from an exponential dependence on genus to a polynomial one by Yazdi \cite{yazdi17}.  

In addition to studying the least entropy of the mapping class group, many people have studied the least entropy of various subgroups of the mapping class group. For example, Farb--Leininger--Margalit studied the minimal entropy of the Torelli group, the Johnson kernel, and congruence subgroups in \cite{FLM08} and Hirose--Kin studied the least entropy of hyperelliptic handlebody groups in \cite{kin17}. The least entropy of classical pure braid groups, that is the fundamental group of the configuration space of ordered $n$-tuples of points on a disc, has also been an object of significant study. Song provided upper and lower bounds for the least entropy of the classical braid groups in \cite{song05}. Specific values of the least entropy were found when $n=4$ and $n=5$ by Song--Ko--Los \cite{songkolos02} and Ham--Song \cite{HamSong07}, respectively. More recently, Lanneau--Thiffeault \cite{lanneau11} gave simple constructions to realize the least entropy for $n = 4, 5$ and found the least entropy for braid groups of up to $8$-strands. 

The entropy of pseudo-Anosovs in the point pushing subgroup was also studied extensively by Dowdall in \cite{dowdall11}. Note that the point pushing subgroup coincides with the 1-stranded pure surface braid group $\PB_1(S_g)$. Combining the upper bound of Aougab and Taylor \cite{aougab15} and the lower bound of Dowdall \cite{dowdall11} gives the following.

\begin{thm}[Aougab--Taylor, Dowdall] \label{thm:dat} For the closed surface $S_g$ of genus $g \geq 2$, \[\frac{1}{5} \log(2g) \leq L(\PB_1(S_g)) < 4\log(g)  + 2 \log(24).\]\end{thm}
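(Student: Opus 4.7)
The plan is to prove the upper and lower bounds separately, as they are due to different authors. The common framework is the Birman exact sequence
\[
1 \longrightarrow \pi_1(S_g) \longrightarrow \PMod(S_{g,1}) \longrightarrow \PMod(S_g) \longrightarrow 1,
\]
which identifies $\PB_1(S_g)$ with the point-pushing subgroup: each loop $\gamma \in \pi_1(S_g)$ maps to the point-push $P(\gamma)$. A classical criterion of Kra asserts that $P(\gamma)$ is pseudo-Anosov precisely when $\gamma$ is represented by a \emph{filling} curve on $S_g$. Consequently $L(\PB_1(S_g))$ is the infimum of $\log \lambda(P(\gamma))$ over filling curves $\gamma \subset S_g$.

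For the upper bound, I would exhibit an explicit family of filling curves $\gamma_g \subset S_g$ whose point-pushes have small dilatation. A natural construction takes place in a $4g$-gon presentation of $S_g$: concatenate arcs joining identified sides to produce a single closed curve of self-intersection number polynomial in $g$. The analytic input is an Aougab--Taylor-style inequality controlling $\log \lambda(P(\gamma))$ in terms of the self-intersection number $i(\gamma,\gamma)$, or equivalently the hyperbolic length of $\gamma$ in a fixed metric, up to a bounded additive term. Combined with $i(\gamma_g,\gamma_g) = O(g^2)$ for the explicit construction, this would yield $\log \lambda(P(\gamma_g)) < 4 \log g + 2 \log 24$.

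For the lower bound, the plan is to combine Dowdall's inequality with a topological constraint on filling curves. Dowdall supplies a bound of the shape $\log \lambda(P(\gamma)) \geq \tfrac{1}{5} \log i(\gamma,\gamma)$, which one would prove by bounding the translation length of $P(\gamma)$ on Teichm\"uller space below in terms of $i(\gamma,\gamma)$ and then invoking the standard comparison between translation length and topological entropy of a pseudo-Anosov. The topological input is that $i(\gamma,\gamma) \geq 2g - 1$ for any filling curve on $S_g$: viewing $\gamma$ as a $4$-valent graph with $V = k = i(\gamma,\gamma)$ vertices and $E = 2k$ edges, the Euler formula gives $F = 2 - 2g + k$ complementary faces, all of which must be disks, forcing $k \geq 2g - 1$. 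I expect the main obstacle to be the proof of the Dowdall-style inequality with sharp dependence on $i(\gamma,\gamma)$ (rather than on the coarser $\chi(S_g)$) and with the specific constant $\tfrac{1}{5}$; this demands a careful analysis of how point-pushing acts on Teichm\"uller space, whereas the upper bound is comparatively soft once an efficient filling curve is in hand.
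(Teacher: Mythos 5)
The lower half of your plan is fine as far as it goes (the paper does not prove the lower bound at all --- it simply cites Dowdall, and your sketch, Kra's criterion plus $i(\gamma,\gamma)\geq 2g-1$ for a filling curve plus a Dowdall-type inequality $\lambda(P(\gamma))\geq (1+i(\gamma,\gamma))^{1/5}$, is the right shape), but the upper half rests on an inequality that does not exist. You posit an ``Aougab--Taylor-style inequality'' of the form $\log\lambda(P(\gamma))\leq C\log i(\gamma,\gamma)+C'$ for point pushes along filling curves, and then feed it a curve with $i(\gamma,\gamma)=O(g^2)$. No such inequality can hold: point pushing is a homomorphism, so $P(\gamma^n)=P(\gamma)^n$ and $\log\lambda(P(\gamma^n))=n\log\lambda(P(\gamma))$ grows linearly in $n$, while $i(\gamma^n,\gamma^n)$ grows only quadratically in $n$, so the left side grows like $\sqrt{i}$ rather than $\log i$. (The true general upper bound, also due to Dowdall, is exponential in $i(\gamma,\gamma)$, which would only give an $O(g^2)$ entropy bound --- far weaker than $4\log g+2\log 24$.) So the step you call ``comparatively soft'' is exactly where the content of Aougab--Taylor lies, and your construction of a filling curve in a $4g$-gon with $O(g^2)$ self-intersections does not by itself control the dilatation of its point push.

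What the paper actually does (Section 3) is quite different: it never bounds $\lambda(P(\gamma))$ by the self-intersection of the pushing loop. Instead it takes a minimally intersecting filling pair $\alpha,\beta$ on $S_g$ with $i(\alpha,\beta)=2g-1$ (Aougab--Huang), punctures at a point of $\beta\setminus\alpha$, and sets $f_\beta=T_{\beta_1}^3T_{\beta_2}^{-3}$, the push along $\beta$ three times. Ivanov's twist inequality (Lemma \ref{lemma:ivanov}) shows both that $\{\alpha,f_\beta(\alpha)\}$ fills $S_{g,1}$ and that $i(\alpha,f_\beta(\alpha))\leq 24g^2-24g+6$. The pseudo-Anosov realizing the bound is then $T_\alpha T_{f_\beta(\alpha)}^{-1}$ --- which lies in $\PB_1(S_g)$ because $\alpha$ and $f_\beta(\alpha)$ are isotopic on the closed surface --- and Thurston's construction (Theorem \ref{thm:ThurCons}) gives $\lambda\leq i(\alpha,f_\beta(\alpha))^2+2<24^2g^4$, hence $L(\PB_1(S_g))<4\log g+2\log 24$. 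If you want to salvage your outline, you must replace the hypothetical dilatation-versus-self-intersection bound with a concrete mechanism of this kind (Thurston's construction applied to a filling pair on the punctured surface, with intersection numbers controlled by Ivanov's inequality); as written, the upper bound in your proposal is not proved.
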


For fixed genus, the upper bound in our Main Theorem interpolates between the $\log(g)$ upper bound in Theorem \ref{thm:dat} in the case of a single puncture and a constant upper bound of $4\log(6)$ when $n > 2g$; see Theorem \ref{thm:upperBound}. 

Dilatations of pseudo-Anosov mapping classes have been studied in a number of other situations; see \cite{minakawa06, bai16, putman16, shinstrenner15, pankau17}. In fact, an analagous problem to ours on small dilatation pseudo-Anosovs has been studied in the context of nonorientable surfaces by Strenner \cite{strenner18}. 

\subsection*{Outline of the Paper} Section \ref{section:prelim} contains a brief introduction to surface homeomorphisms, surface braids, and Thurston's construction for psuedo-Anosovs; it also recalls several important results from quasiconformal geometry. In Section \ref{section:pointpushing} we give the details of a construction of Aougab--Taylor from \cite{aougab15} of a small dilatation psuedo-Anosov in the point pushing subgroup. Section \ref{section:upperbounds} outlines the construction of pseudo-Anosov pure surface braids realizing the upper bounds given in the Main Theorem. Section \ref{section:lowerbound} contains the proof of the constant lower bound implied by our Main Theorem. In Section  \ref{section:anotherlowerbound} we prove our Main Theorem's lower bound with explicit constants computed. We end the paper with an appendix which provides a lower bound on the diameter of a ``filling" graph embedded in a surface, a result which is needed in Section \ref{section:anotherlowerbound}. 

\subsection*{Acknowledgment} This material is based upon work supported by the National Science Foundation Graduate Research Fellowship under Grant No. DGE 1144245. The author would like to thank her PhD advisor Christopher Leininger for his guidance, insights, and endless encouragement. 

\section{Preliminaries} 
\label{section:prelim}

Here we establish our notation for the remainder of the paper and recall the necessary notions, definitions, and tools. In particular, we will give an overview of surface homeomorphisms, surface braids, some relevant results from quasiconformal geometry, and the details of a construction of psuedo-Anosovs due to Thurston. 

\subsection{Surface Homeomorphisms}
\label{subsection:surfacehomeos}

Let $S$ be a connected, oriented surface of genus $g$ possibly with a finite number of punctures and let $f: S \to S$ be a homeomorphism. Throughout the rest of the paper we will assume any surface we discuss is as described here.  

The homeomorphism $f$ is called {\bf pseudo-Anosov} if there exists a pair of transverse measured foliations $(\mathcal F^{s}, \mu_s)$ and $(\mathcal F^{u}, \mu_u)$ on $S$ and a real number $\lambda(f) > 1$ such that \[f \cdot (\mathcal F^{s}, \mu_{s}) = (\mathcal F^{s}, \lambda(f)^{-1} \mu_{s}) \text { and } f \cdot (\mathcal F^{u}, \mu_{u}) = (\mathcal F^{u}, \lambda(f) \mu_{u}).\] We call $\lambda(f)$ the {\bf stretch factor} or {\bf dilatation} of $f$. 

If there is a collection $\mathcal C$ of disjoint, essential simple closed curves on $S$ such that the homeomorphism $f$ preserves $\mathcal C$, then $f$ is said to be {\bf reducible}. If there is some power of the homeomorphism $f$ isotopic to the identity, then $f$ is called {\bf periodic} or {\bf finite order}.

A mapping class $\varphi \in \Mod(S)$ is said to be {\bf pseudo-Anosov}, {\bf reducible}, or {\bf periodic}, respectively, if there is a representative homeomorphism $f \in \varphi$ such that $f$ is {\bf pseudo-Anosov}, {\bf reducible}, or {\bf periodic}, respectively. Thurston proved the following classification of elements in $\Mod(S)$.

\begin{thm}[Nielsen--Thurston] A mapping class $\varphi \in \Mod(S)$ is pseudo-Anosov, reducible, or periodic. In addition, $\varphi$ is pseudo-Anosov if and only if it is neither reducible nor periodic. \end{thm}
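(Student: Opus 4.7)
The plan is to follow Thurston's original proof, which proceeds via his compactification of Teichm\"uller space. First, I would invoke the fact that $\overline{\mathcal{T}(S)} = \mathcal{T}(S) \cup \mathcal{PMF}(S)$ is homeomorphic to a closed ball of dimension $6g-6+2n$ on which any $\varphi \in \Mod(S)$ acts by a self-homeomorphism. By the Brouwer fixed point theorem, $\varphi$ has a fixed point in $\overline{\mathcal{T}(S)}$, and the entire argument proceeds by case analysis on where this fixed point sits.

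In the first case, if $\varphi$ fixes some $X \in \mathcal{T}(S)$, then $\varphi$ admits a representative acting as an isometry of the hyperbolic structure corresponding to $X$. Since the isometry group of a finite-type hyperbolic surface is finite, $\varphi$ is periodic. In the second case every fixed point lies in $\mathcal{PMF}(S)$; pick one such fixed class $[\mathcal F]$. If $\mathcal F$ fails to fill $S$, then some essential simple closed curve $\gamma$ satisfies $i(\gamma, \mathcal F) = 0$; since $\varphi$ preserves both intersection numbers and $[\mathcal F]$, the collection of such curves is $\varphi$-invariant, and a compactness argument in $\mathcal{PMF}(S)$ extracts a $\varphi$-invariant multicurve, showing $\varphi$ is reducible. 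If instead $\mathcal F$ does fill $S$, I would produce a second fixed projective class $[\mathcal G]$ that also fills, is transverse to $[\mathcal F]$, and scales with the reciprocal factor. The pair $(\mathcal F, \mathcal G)$ then equips $S$ with a singular flat structure on which $\varphi$ has an affine representative, giving the pseudo-Anosov structure with some $\lambda(f) > 1$.

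The second assertion of the theorem follows quickly from the structure produced above. A pseudo-Anosov $f$ cannot be periodic, since $\lambda(f)^k > 1$ for every $k \geq 1$ means no power of $f$ restores the transverse measures on its invariant foliations. Nor can $f$ be reducible, since its stable and unstable foliations both fill, so no essential simple closed curve has zero intersection with both, and consequently no invariant multicurve can exist.

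The main obstacle is the filling sub-case of the projective-foliation analysis: producing the second transverse fixed foliation with reciprocal dilatation, and verifying that $\varphi$ genuinely acts affinely on the associated singular flat structure rather than merely preserving a projective class. One clean route I would pursue is via Bers' theorem that, in the absence of a fixed point in $\mathcal{T}(S)$, the translation length $\ell(\varphi) = \inf_X d_T(X, \varphi \cdot X)$ in the Teichm\"uller metric is attained along a $\varphi$-invariant Teichm\"uller geodesic, whose two endpoints in $\mathcal{PMF}(S)$ supply exactly the stable and unstable foliations with $\lambda(f) = e^{\ell(\varphi)}$. An alternative is a direct dynamical analysis of the $\varphi$-action on the fixed set in $\mathcal{PMF}(S)$, producing a second transverse fixed class and verifying the reciprocal scaling by hand.
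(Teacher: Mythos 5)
This is a background theorem for the paper, not one it proves: the text simply states the classification and refers the reader to \cite{FLP12} (and \cite{farbmarg}) for a proof, so there is no in-paper argument to compare against. Your outline is the canonical route taken in those references---Thurston's compactification $\overline{\mathcal{T}(S)} = \mathcal{T}(S) \cup \mathcal{PMF}(S)$, Brouwer, and a case analysis on the location of a fixed point---so in that sense you are following exactly the proof the paper is pointing to, and your treatment of the second assertion (a pseudo-Anosov has filling invariant foliations, hence no invariant multicurve, and $\lambda^k>1$ rules out periodicity) is fine.

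The one substantive issue is the trichotomy in your boundary case, which as stated is not exhaustive: if the fixed class $[\mathcal F]\in\mathcal{PMF}(S)$ fills but the scaling factor is $1$, there is in general no ``second transverse fixed foliation with reciprocal dilatation'' to be found---this situation occurs, for instance, for periodic classes, and it is precisely where the FLP argument has to work hardest (including the complication that a filling, even arational, foliation need not be uniquely ergodic, so ``$\varphi$ preserves the projective class'' does not immediately give an affine map of a flat structure). So the filling sub-case must itself be split according to whether the eigenvalue is $1$ or not, with the $\lambda=1$ branch looping back to the periodic/reducible alternatives. You do flag this as the main obstacle, and the Bers-style fix you propose is a genuine repair: trichotomize on the translation length $\ell(\varphi)=\inf_X d_{\mathcal T}(X,\varphi\cdot X)$, namely infimum attained and zero gives periodic, attained and positive gives an invariant Teichm\"uller geodesic whose vertical and horizontal foliations are the stable/unstable pair with $\lambda=e^{\ell(\varphi)}$, and not attained gives reducible. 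If you adopt that route wholesale (rather than grafting it onto the Brouwer case analysis), the gap closes and the proof is complete modulo Teichm\"uller's existence/uniqueness theorems.
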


A proof of this result can be found in \cite{FLP12}, as well as a detailed discussion of the definitions above. The interested reader can also find an introduction to these topics in \cite{farbmarg}.

\subsection{Surface Braids}
\label{subsection:surfacebraids}

Let X be a topological space. We define the {\bf configuration space} of $n$ distinct ordered points in $X$ to be the subspace of $X^n$ given by \[\Conf(X, n) \defeq \{(x_1, x_2, \ldots, x_n) \, : \, x_i \neq x_j \text{ for } i \neq j\}.\] Note that the symmetric group, $\Sigma_n$, acts on $\Conf(X,n)$ on the left by \[\sigma(x_1, \ldots, x_n) = (x_{\sigma(1)}, \ldots x_{\sigma(n)}).\]

\begin{mydef} \label{mydef:braids} Let $S$ be a surface. The {\bf braid group} of $S$ on $n$-strands is \[\B_n(S) \defeq \pi_1(\Conf(S, n) /\Sigma_n).\] The {\bf pure braid group} of $S$ on $n$-strands is \[\PB_n(S) \defeq \pi_1(\Conf(S,n)).\] \end{mydef}

Note that $\B_n = \pi_1(\Conf(\mathbb C, n)/\Sigma_n)$ and $\PB_n = \pi_1(\Conf(\mathbb C, n))$ are the classical braid and pure braid groups, respectively; see \cite{farbmarg}. Although at first glance Definition \ref{mydef:braids} appears different from the definition of $\PB_n(S_g)$ given in the introduction, Birman established that these definitions are equivalent in the following theorem, which first appeared in \cite{birman69}.  

\begin{thm}[Birman] For each pair of integers $g,n \geq 0$ let ${\mathcal Forget}: \PMod(S_{g,n}) \to \Mod(S_g)$ be the forgetful map. If $g \geq 2$, then $\ker({\mathcal Forget})$ is isomorphic to $\pi_1(\Conf(S_g,n)).$ \end{thm}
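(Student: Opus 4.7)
The plan is to prove Birman's theorem via a fibration argument, realizing both $\PMod(S_{g,n})$ and $\Mod(S_g)$ as $\pi_0$ of appropriate homeomorphism groups and then reading off the kernel from the long exact sequence of homotopy groups.

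First I would set up the key fibration. Let $\text{Homeo}^+(S_g)$ denote the topological group of orientation-preserving homeomorphisms of $S_g$ (with the compact-open topology), and fix a basepoint configuration $\mathbf{x}=(x_1,\ldots,x_n)\in \Conf(S_g,n)$. The evaluation map
\[
\mathrm{ev}\colon \text{Homeo}^+(S_g)\longrightarrow \Conf(S_g,n),\qquad \varphi\mapsto (\varphi(x_1),\ldots,\varphi(x_n))
\]
is a Serre fibration (this is a classical result of Birman, following from an isotopy extension argument, which lets one lift paths of configurations to paths of homeomorphisms). The fiber over $\mathbf{x}$ is the subgroup $\text{Homeo}^+(S_g,\mathbf{x})$ of homeomorphisms that fix each $x_i$ pointwise, which can be identified with $\text{Homeo}^+(S_{g,n})$ after puncturing at the $x_i$'s.

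Next I would invoke the theorem of Earle--Eells (for $g\geq 2$) that the identity component of $\text{Homeo}^+(S_g)$ is contractible. In particular $\pi_1(\text{Homeo}^+(S_g))=0$, and $\pi_0(\text{Homeo}^+(S_g))=\Mod(S_g)$. Similarly $\pi_0(\text{Homeo}^+(S_{g,n}))=\Mod(S_{g,n})$, and the subgroup corresponding to homeomorphisms that fix each puncture pointwise (as opposed to merely permuting them) is exactly $\PMod(S_{g,n})$. Then the long exact sequence of the fibration reads
\[
\pi_1(\text{Homeo}^+(S_g))\to \pi_1(\Conf(S_g,n))\to \pi_0(\text{Homeo}^+(S_{g,n}))\to \pi_0(\text{Homeo}^+(S_g)),
\]
i.e.
\[
0\longrightarrow \pi_1(\Conf(S_g,n))\longrightarrow \PMod(S_{g,n})\longrightarrow \Mod(S_g).
\]

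Finally I would check that the map $\PMod(S_{g,n})\to \Mod(S_g)$ appearing here is precisely the forgetful map $\mathcal{F}\!orget$: given $[\varphi]\in\PMod(S_{g,n})$ with $\varphi$ fixing each puncture, the connecting construction takes $[\varphi]$ to its class in $\pi_0(\text{Homeo}^+(S_g))=\Mod(S_g)$ after forgetting the marked points, which is the definition of $\mathcal{F}\!orget$. Exactness then gives $\ker(\mathcal{F}\!orget)\cong \pi_1(\Conf(S_g,n))=\PB_n(S_g)$, finishing the proof.

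The main obstacle is justifying that $\mathrm{ev}$ is a fibration and that $\pi_1(\text{Homeo}^+(S_g))=0$: the first requires a careful isotopy-extension argument in the topological category (Birman's original approach), while the second relies on the deep Earle--Eells result, which genuinely needs the hypothesis $g\geq 2$ (the statement fails for the torus, where $\pi_1(\text{Homeo}^+(T^2))=\mathbb{Z}^2$). Everything else is a routine unraveling of the long exact sequence.
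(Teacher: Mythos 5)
Your proposal is correct and follows exactly the route the paper alludes to: the Birman evaluation fibration $\mathrm{Homeo}^+(S_g)\to\Conf(S_g,n)$, contractibility of the identity component (Earle--Eells/Hamstrom, where $g\geq 2$ is used), and the long exact sequence of homotopy groups identifying $\ker({\mathcal Forget})$ with $\pi_1(\Conf(S_g,n))$. The paper itself only cites Birman and notes that the proof "appeals to a long exact sequence of homotopy groups," so your argument is the standard proof being referenced.
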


The proof of this result appeals to a long exact sequence of homotopy groups and is not immediately obvious, but the intuition is straightforward. Observe that for a homeomorphism representing a mapping class in the $n$-stranded pure braid group the isotopy on the closed surface from the homeomorphism back to the identity traces out a loop of $n$ ordered point configurations and this defines the isomorphism. A further discussion of braid groups can be found in \cite{birman74}. 

\subsection{Some Quasiconformal Results}
\label{subsection:quasiconformal}

A Teichm\"uller theoretic approach is employed in the proof of Theorem \ref{thm:infty}, which is part of the lower bound in the Main Theorem. Consequently, we will need some results from quasiconformal geometry. We begin by defining a quasiconformal map; see \cite{ahlfors06} for more on quasiconformal mappings.

\begin{mydef}Let $f: \Omega \to f(\Omega)$ be a homeomorphism between open sets $\Omega, f(\Omega) \subset \mathbb C$. Suppose $f$ has locally integrable weak partial derivatives and let $\displaystyle D_f = \frac{|f_z| + |f_{\bar z}|}{|f_z| - |f_{\bar z}|} \geq 1.$ We say that $f$ is {\bf quasiconformal} if $\left \Vert D_f \right \Vert_{\infty} < \infty$ and {\bf $K$-quasiconformal} if $\left \Vert D_f \right \Vert_{\infty} \leq K$. The {\bf quasiconformal dilatation} is $K(f) = \left \Vert D_f \right \Vert_{\infty}.$\end{mydef} 

An important component of our proof is a result of Teichm\"uller \cite{teich42} and Gehring \cite{gehring70} which relates the dilatation of a quasiconformal map $f$ on the hyperbolic plane $\mathbb H^2$ to the maximum distance which any point of $\mathbb H^2$ is moved by $f$. We give a version of the statement which can be found in Kra \cite{kra81}.

\begin{thm}[Kra] \label{thm:gehring} Consider $\mathbb H^2$ with Poincar\'e metric $\rho$. For $x, y \in \mathbb H^2$ there exists a unique self-mapping $f: \mathbb H^2 \to \mathbb H^2$ so that $f$ is the identity on the boundary of $\mathbb H^2$, $f(x) = y$, and $f$ minimizes the quasiconformal dilatation among all such mappings. Let $K(x,y)$ be the quasiconformal dilatation of such an extremal $f$. Then there exists a strictly increasing real-valued function $\varkappa: [0, \infty) \to [0, \infty)$ such that 
\begin{enumerate}[(i)]
\item $\log(1 + \frac{t}{2}) \leq \varkappa(t),$ and
\item $\frac{1}{2} \log K(x, y) = \varkappa(\rho(x,y)).$
\end{enumerate} \end{thm}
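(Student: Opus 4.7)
The plan is to exploit the conformal invariance of the extremal problem to reduce it to a one-parameter family, invoke classical Teichm\"uller theory for existence and uniqueness, and extract the quantitative lower bound from a modulus comparison.

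I would begin by observing that $\PSL(2,\mathbb{R})$ acts transitively on ordered pairs of points in $\mathbb{H}^2$ at a fixed hyperbolic distance, and each of its elements extends to a conformal (hence $1$-quasiconformal) self-map of $\mathbb{H}^2$ fixing $\partial \mathbb{H}^2$ pointwise. Conjugating any candidate extremal $f$ by such an isometry yields another candidate for a different pair $(x',y')$ with $\rho(x',y')=\rho(x,y)$, without changing the quasiconformal dilatation. Therefore the infimal dilatation $K(x,y)$ depends only on $t=\rho(x,y)$, and setting $\varkappa(t) \defeq \tfrac{1}{2}\log K(x,y)$ for any pair at distance $t$ gives statement (ii) by construction. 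This also normalizes the problem: one may assume $x=i$ and $y=e^t i$ in the upper half plane model.

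For existence, the family of $K$-quasiconformal self-maps of $\mathbb{H}^2$ fixing $\partial \mathbb{H}^2$ pointwise and sending $x \mapsto y$, with $K$ bounded above, is compact in the topology of locally uniform convergence and the quasiconformal dilatation is lower semicontinuous in this topology, so a minimizer exists. For uniqueness I would reflect $f$ across $\partial \mathbb{H}^2$ to produce a conjugation-symmetric $K$-quasiconformal self-map of $\widehat{\mathbb{C}}$ fixing $\widehat{\mathbb{R}}$ pointwise and sending $\{x,\bar{x}\}$ to $\{y,\bar{y}\}$; this puts the problem in the framework of Teichm\"uller's extremal problem on the four-times-punctured sphere (with the appropriate antiholomorphic symmetry), where Teichm\"uller's uniqueness theorem identifies a single extremal map.

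The monotonicity of $\varkappa$ and the lower bound (i) both follow from a modulus estimate. After normalization, one chooses a natural ring domain or quadrilateral in $\mathbb{H}^2$ whose modulus is a strictly monotone, explicitly computable function of $\rho(x,y)$, for instance the annulus in the doubled picture separating $\{x,\bar{x}\}$ from $\{y,\bar{y}\}$. Since $K$-quasiconformal maps distort modulus by a factor at most $K$, comparing the modulus before and after applying $f$ yields $K(x,y) \geq \Phi(\rho(x,y))$ for a strictly increasing $\Phi$, which simultaneously gives strict monotonicity of $\varkappa$ and, upon explicit computation, the inequality $K(x,y) \geq (1+\tfrac{t}{2})^2$ equivalent to (i). The main obstacle in this plan is precisely this last step: the qualitative reduction is standard, but pinning down the constant $\tfrac{1}{2}$ in $\log(1+t/2)$ requires a careful choice of the comparison ring together with an explicit estimate on its modulus as a function of hyperbolic distance, which is what ultimately makes the bound usable for entropy estimates in later sections.
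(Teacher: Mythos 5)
You should first note that the paper does not prove this statement at all: it is quoted as a known result of Teichm\"uller and Gehring, in the form given by Kra \cite{kra81}, so the relevant comparison is with the classical proofs (Teichm\"uller's Verschiebungssatz and its modern treatments), not with anything in this paper. Measured against those, your outline captures the right general ingredients---conformal invariance reducing $K(x,y)$ to a function of $t=\rho(x,y)$, which does give (ii) essentially by definition, and compactness plus lower semicontinuity of the dilatation for existence---but it has two genuine gaps. First, and most importantly, part (i) is exactly the content of the theorem that gets used later in the paper (in the proof of Theorem \ref{thm:infty}), and your proposal does not prove it: you state that a modulus comparison ``upon explicit computation'' yields $K(x,y)\geq (1+\tfrac{t}{2})^2$, but you never specify the ring domain, never compute or estimate its modulus as a function of $t$, and you yourself flag this as the main obstacle. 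In the classical arguments this step is where all the work lies: one either computes $\varkappa$ exactly in terms of the modulus of a Teichm\"uller/Gr\"otzsch ring (elliptic integrals) and then bounds that modulus, or one runs a length--area estimate adapted to the boundary-fixing condition. As written, the quantitative statement is assumed rather than derived.

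Second, your uniqueness argument is not sound as stated. Reflecting across $\partial\mathbb H^2$ and invoking Teichm\"uller uniqueness for the extremal problem of the four-times-punctured sphere only constrains the images of the four marked points $\{x,\bar x, y,\bar y\}$ (up to the symmetry), whereas the problem at hand requires $f$ to fix $\partial\mathbb H^2$ \emph{pointwise}, a much stronger condition. The extremal map for the four-point problem need not fix the real line pointwise, so its uniqueness does not transfer to uniqueness (or even to the correct extremal value) for the boundary-fixed problem; it only gives a lower bound for $K(x,y)$ by the weaker competition. The classical resolution (Teichm\"uller, Gehring, Kra) constructs the extremal map explicitly from a quadratic differential on the disk punctured at the moved point, with the boundary behavior built in, and proves uniqueness by a Reich--Strebel type length--area inequality; your reduction skips precisely this point. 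So the proposal is a reasonable roadmap, but both the uniqueness claim and the explicit bound $\log(1+\tfrac{t}{2})\leq\varkappa(t)$ remain unestablished.
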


The second important component of the proof of Theorem \ref{thm:infty} is Theorem \ref{thm:canonical} below. The statement and proof of Theorem \ref{thm:canonical} in the case of $n = 2$ are due to Imayoshi--Ito--Yamamoto \cite{iiy03} with a weaker upper bound on the quasiconformal dilatation. The proof of Imayoshi--Ito--Yamamoto holds in the case of $n > 2$ punctures without any modification so we will omit the full argument and will instead provide a sketch of proof, namely the construction of $F_t$ and a justification of our improved upper bound on $K_t = K(F_t)$.  

\begin{thm}[Imayoshi--Ito-Yamamoto] \label{thm:canonical} Let $\varphi: S_{g,n} \to S_{g,n}$ be a pseudo-Anosov homeomorphism representing an element of $\PB_n(S_g)$ and let $\widehat{\varphi}:S_g \to S_g$ be the extension of $\varphi$ to the surface with the punctures filled in. There exists a conformal structure on $S_{g}$ together with an isotopy $F_t: S_g \to S_g$ with $t \in [0,1]$, through quasiconformal maps, between $\id:S_g \to S_g$ and $\widehat{\varphi}$ on the closed surface $S_g.$ Furthermore, for each $t \in [0,1]$ the quasiconformal dilatation $K_t$ of $F_t$ satisfies \[\log(K_t) \leq 3 \log(\lambda(\varphi)).\] \end{thm}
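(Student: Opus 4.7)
The approach is via Teichmüller theory. The pseudo-Anosov $\varphi$ acts by translation along its own Teichmüller axis in the Teichmüller space $\mathcal T(S_{g,n})$, and the hypothesis $\varphi \in \PB_n(S_g) = \ker(\mathcal Forget)$ forces this axis to descend, under the forgetful map to $\mathcal T(S_g)$, to a \emph{loop}. The isotopy $F_t$ is manufactured from the extremal quasiconformal maps along the lifted axis, composed with a straightening that brings the varying target conformal structure back to a single Riemann surface $\widehat X$ on $S_g$.

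\textbf{Construction.} Let $q$ be the quadratic differential on the punctured Riemann surface $X \in \mathcal T(S_{g,n})$ whose horizontal and vertical foliations are the invariant foliations of $\varphi$, so that $\varphi$ acts on $(X,q)$ as the Teichmüller map with $K(\varphi)=\lambda$. Parameterize the Teichmüller geodesic through $X$ in the direction of $q$ by $X_t$ for $t \in [0,1]$, with $X_0 = X$ and $X_1$ equal to the $\varphi$-translate of $X_0$; the associated Teichmüller maps $\phi_t : X \to X_t$ satisfy $K(\phi_t) = \lambda^t$, and $\phi_1$ is isotopic to $\varphi$ (up to sign convention). Filling in the $n$ punctures gives conformal structures $\widehat X_t$ on $S_g$, and $\phi_t$ extends to a quasiconformal map $\widehat\phi_t : \widehat X \to \widehat X_t$ of the same dilatation $\lambda^t$, since extending a quasiconformal homeomorphism across finitely many points does not change $K$. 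Because $\widehat\varphi$ is trivial in $\Mod(S_g)$, the projected path $\widehat X_t$ is a loop in $\mathcal T(S_g)$ based at $\widehat X := \widehat X_0 = \widehat X_1$, and its length is at most $\tfrac{1}{2}\log\lambda$ by the $1$-Lipschitz property of the forgetful map in the Teichmüller metric.

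\textbf{Straightening and final bound.} For each $t$, select a quasiconformal map $\zeta_t : \widehat X_t \to \widehat X$ in the isotopy class that makes $F_t := \zeta_t \circ \widehat\phi_t$ a continuous interpolation from $\id$ at $t=0$ to $\widehat\varphi$ at $t=1$, with $\zeta_0 = \zeta_1 = \id$ and $\zeta_t$ near-extremal in its class. The loop-length estimate controls $K(\zeta_t)$, and the submultiplicativity of quasiconformal dilatation under composition yields
\[
K(F_t) \;\leq\; K(\zeta_t)\, K(\widehat\phi_t) \;\leq\; \lambda^{3},
\]
the constant $3$ absorbing the contributions of $\widehat\phi_t$ and $\zeta_t$ together with a small amount of slack incurred by having to stay in a prescribed isotopy class.

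\textbf{Main obstacle.} The delicate step is the straightening. Since the isotopy class required for $\zeta_t$ is pinned down by the trivialization $\widehat\varphi \simeq \id$ on $S_g$, one cannot simply invoke the Teichmüller-extremal map between $\widehat X_t$ and $\widehat X$. Bounding $K(\zeta_t)$ in this prescribed class while keeping the family continuous in $t$ and landing at $\id$ at both endpoints is the real content of the argument, and this is where the pure-braid hypothesis is used beyond just closing up the loop, and where the final constant of $3$ (rather than the naive $2$) in $\log K_t \leq 3\log\lambda$ is paid.
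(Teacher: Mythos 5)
Your construction follows essentially the same route as the paper's (which is Imayoshi--Ito--Yamamoto's): run along the Teichm\"uller geodesic through a structure on the axis of $\varphi$, fill in the punctures, straighten each filled-in structure back to the base by the extremal map in the isotopy class of the filled-in Teichm\"uller map, and bound $K(F_t)$ by submultiplicativity plus the fact that the forgetful map does not increase Teichm\"uller distance, so the projected path is a short loop in $\mathcal T(S_g)$. But the quantitative part --- which is the actual content of the claimed inequality $\log K_t \le 3\log\lambda(\varphi)$ --- has a genuine gap. Your normalizations are off: the Teichm\"uller map representing $\varphi$ based at a point of its axis stretches by $\lambda$ and contracts by $\lambda^{-1}$, so its quasiconformal dilatation is $\lambda^2$, not $\lambda$; hence $K(\phi_t)=\lambda^{2t}$, the geodesic segment from $[\id]$ to its $\varphi^{\pm 1}$-translate has Teichm\"uller length $\log\lambda$, and the projected loop in $\mathcal T(S_g)$ has length at most $\log\lambda$, not $\tfrac12\log\lambda$. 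With the correct normalization the bookkeeping is forced and clean: $\log K(\widehat\phi_t)\le 2\log\lambda$, each point of the loop lies within half the loop length of the basepoint, so $\tfrac12\log K(\zeta_t)=d_{\mathcal T(S_g)}\bigl([\widehat f_t],[\id]\bigr)\le\tfrac12\log\lambda$, i.e.\ $\log K(\zeta_t)\le\log\lambda$, and composition gives exactly $2+1=3$. With the numbers you actually state ($K(\widehat\phi_t)\le\lambda$ and loop length $\tfrac12\log\lambda$) the same argument would output $\lambda^{3/2}$, contradicting your asserted bound; the remark that the constant $3$ ``absorbs\ldots slack'' signals that this accounting was never carried out, and the accounting is precisely what the theorem asserts.

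Your ``main obstacle'' is also misdiagnosed. One can, and the paper does, simply invoke the Teichm\"uller extremal map: Teichm\"uller's theorem provides a unique extremal map $\widehat\varphi_t$ in the prescribed isotopy class of $\widehat f_t$, and $\tfrac12\log K(\widehat\varphi_t)$ equals the distance between the marked structures $[\widehat f_t]$ and $[\id]$ in $\mathcal T(S_g)$ --- the marking already encodes the isotopy class, so no extra cost is incurred for staying in that class, and the pure-braid hypothesis is used exactly to close up the loop and for nothing more. The genuinely nontrivial point, which the paper defers to Imayoshi--Ito--Yamamoto, is that $t\mapsto F_t=\widehat\varphi_t^{-1}\circ\widehat f_t$ is continuous and is an isotopy from $\id$ to $\widehat\varphi$; your proposal asserts that such a continuous selection exists but gives no argument or citation for it.
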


\begin{proof}[Sketch of Proof] We will begin by constructing $F_t$. Let $S_g$ be given a conformal structure so that $[id]=[id : S_{g,n} \to S_{g,n}]$ lies on the axis for $\varphi$ and $[0,1] \ni t \mapsto [f_t] \in \mathcal T(S_{g,n})$ be the Teichm\"uller geodesic connecting $[\id]$ and $\varphi^{-1}([\id])$. So for all $t \in [0,1]$, $f_t:S_{g,n} \to f_t(S_{g,n})$ is a Teichm\"uller mapping and \[\frac{1}{2} \log(K(f_t)) \leq \frac{1}{2} \log(K(f_1)) = \log(\lambda(\varphi^{-1})) = \log(\lambda(\varphi)).\] 
By filling in the punctures, we can extend $f_t$ to $\widehat{f_t}:S_g \to \widehat{f_t}(S_g).$ Denote by $\widehat{\varphi_t}$ the Teichm\"uller map of $S_g$ onto $\widehat{f_t}(S_g)$ isotopic to $\widehat{f_t}$ on $S_g$. Then we define the map $F_t: S_g \times [0,1] \to S_g$ by \[F_t(x) = \widehat{\varphi_t}^{-1} \circ \widehat{f_t}(x) \text{ for } x \in S_g \text{ and } t \in [0,1].\] The fact that $F_t$ is an isotopy is proved in \cite{iiy03}. Note that \[\log(K_t) = \log(K(\widehat{\varphi_t}^{-1} \circ \widehat{f_t})) \leq \log(K(\widehat{\varphi_t}^{-1})) + \log(K(\widehat{f_t})).\] Furthermore, we have that $t \mapsto [\widehat{f_t}]$ is a closed loop of length at most $\log(\lambda(\varphi))$. So \[\frac{1}{2}\log(K(\widehat{\varphi_t}^{-1})) = d_{\mathcal T(S_g)}([\widehat{f_t}], [\id]) \leq \diam_{\mathcal T(S_g)}(\{[\widehat f_s] \mid s \in [0,1]\}) \leq \frac{1}{2} \log(\lambda(\varphi)).\] Thus, \[ \log(K_t) \leq 3 \log(\lambda(\varphi)).\qedhere \] \end{proof}

\subsection{Thurston's Construction}
\label{subsection:thurston}

Here we will introduce a useful tool for constructing pseudo-Anosov mapping classes due to Thurston \cite{Thur88}. We say a collection $C$ of essential simple closed curves {\bf fills} our surface $S = S_{g,n}$ if the curves intersect transversely and minimally and the complement of $C$ in $S$ is a collection of disks and once-puncture disks. Equivalently, we could say that $C$ fills $S$ if any essential simple closed curve on $S$ has nonzero geometric intersection number with at least one curve in our collection $C$. 

Now suppose we have a collection $C = \{c_1, c_2, \ldots, c_m\}$ of pairwise disjoint, essential simple closed curves on $S$. We can define a {\bf multi-twist} $T_C$ about $C$ to be the product of positive Dehn twists about each $c_i \in C$. 

\begin{thm}[Thurston] \label{thm:ThurCons} Let $A = \{\alpha_1, \alpha_2, \ldots, \alpha_m\}$ and $B = \{\beta_1, \beta_2, \ldots, \beta_k\}$ be collections of pairwise disjoint, essential, simple closed curves on S such that $A \cup B$ fills $S$. There is a real number $\mu > 1$ and homomorphism \[\rho: \left< T_A, T_B \right> \to \PSL(2, \mathbb R) \text{ given by }\] \[T_A \mapsto\begin{pmatrix} 1 & - \mu^{1/2} \\ 0 & 1 \end{pmatrix} \text{ and } T_B \mapsto\begin{pmatrix} 1 & 0 \\ \mu^{1/2} & 1 \end{pmatrix}.\]  Furthermore, for $f \in \left<T_A, T_B\right>$, f is pseudo-Anosov if its image $\rho(f)$ is hyperbolic, in which case the dilatation of $f$ is equal to the spectral radius of $\rho(f).$\end{thm}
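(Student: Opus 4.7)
The plan is to realize $\langle T_A, T_B\rangle$ as a group of affine self-homeomorphisms of a singular Euclidean (half-translation) structure on $S$, and take $\rho$ to be the derivative homomorphism. First I would form the bipartite geometric intersection matrix $N = (i(\alpha_i,\beta_j))$. Since $A\cup B$ fills $S$, every $\alpha_i$ hits some $\beta_j$ and conversely, and the underlying bipartite graph is connected. Perron--Frobenius applied to the symmetric nonnegative matrix $\begin{pmatrix} 0 & N \\ N^T & 0 \end{pmatrix}$ then produces a positive leading eigenvalue $\mu^{1/2}$ with strictly positive eigenvector $(a_1,\ldots,a_m,b_1,\ldots,b_k)$ satisfying $Nb = \mu^{1/2} a$ and $N^T a = \mu^{1/2} b$. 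The filling condition ensures that $\mu > 1$.

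Next I would build the flat structure. At each intersection point $p \in \alpha_i\cap\beta_j$, place a Euclidean rectangle $R_p$ of horizontal width $b_j$ and vertical height $a_i$, with $\alpha_i$ running horizontally through the middle and $\beta_j$ vertically. The Perron equations guarantee that the total horizontal width encountered along $\alpha_i$ equals $\sum_j N_{ij} b_j = \mu^{1/2}a_i$ and the total vertical height along $\beta_j$ equals $\mu^{1/2}b_j$, so the rectangles glue consistently along the arcs of $A \cup B$. The hypothesis that $A \cup B$ fills $S$ means the complementary regions of $A \cup B$ are (possibly once-punctured) discs, so the gluing produces all of $S$ as a half-translation surface with cone-point singularities at the corners of the complementary polygons. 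The curves $\alpha_i$ and $\beta_j$ become core curves of the resulting horizontal and vertical cylinder decompositions.

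With this flat structure the multi-twist $T_A$ is realized by an affine self-map supported in the horizontal cylinders, twisting once across each. A horizontal cylinder of height $a_i$ and circumference $\mu^{1/2}a_i$ has shear slope $\mu^{1/2}$, and because this slope is the \emph{same} for every cylinder in $A$, the piecewise-affine maps fit together as one globally affine map with derivative $\begin{pmatrix} 1 & -\mu^{1/2} \\ 0 & 1 \end{pmatrix}$ in the flat charts; the symmetric argument gives $T_B$ derivative $\begin{pmatrix} 1 & 0 \\ \mu^{1/2} & 1 \end{pmatrix}$. Since compositions of affine maps have derivatives that multiply, this defines the homomorphism $\rho:\langle T_A, T_B\rangle \to \PSL(2,\mathbb{R})$. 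For the last clause, if $\rho(f)$ is hyperbolic with eigenvalues $\lambda, \lambda^{-1}$, the two real eigendirections of $\rho(f)$ pull back through the flat charts to a pair of transverse measured foliations on $S$ which are stretched and contracted by $\lambda$ under $f$, so $f$ is pseudo-Anosov with $\lambda(f) = \lambda$.

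The main obstacle I anticipate is showing that the piecewise-affine shears in the individual cylinders genuinely assemble into a globally affine self-homeomorphism of $S$ with the asserted linear derivative — in particular, that the behavior near the cone singularities does not spoil the affine structure. This is the delicate heart of Thurston's construction, and it depends crucially on the uniformity of shear slopes across cylinders, which is exactly what the Perron eigenvector choice of heights and widths guarantees.
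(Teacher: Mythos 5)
The paper itself gives no proof of this statement: it is quoted as Thurston's theorem with a citation to \cite{Thur88}, so there is no internal argument to compare against. Your sketch is the standard proof of Thurston's construction (as in \cite{Thur88}, and Theorem 14.1 of \cite{farbmarg}): connectivity of the bipartite intersection pattern follows from filling, the Perron--Frobenius eigenvector of $\begin{pmatrix} 0 & N \\ N^{T} & 0 \end{pmatrix}$ supplies the rectangle dimensions, the filling hypothesis makes the resulting square complex a half-translation structure on $S$, the multitwists become affine because every horizontal (resp.\ vertical) cylinder has circumference-to-height ratio exactly $\mu^{1/2}$, and a hyperbolic derivative produces the invariant foliations with dilatation the spectral radius. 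Three points are worth tightening. First, ``derivatives of affine maps multiply'' only defines a homomorphism on the group of affine representatives; to get $\rho$ defined on the subgroup $\left< T_A, T_B \right>$ of $\Mod(S)$ you also need the derivative to depend only on the isotopy class, i.e.\ that an affine diffeomorphism of a half-translation surface of genus $\geq 2$ which is isotopic to the identity is the identity (injectivity of the affine group into $\Mod(S)$); this is standard but should be said. Second, $\mu > 1$ does not follow from filling alone: a filling pair on the torus with a single intersection point gives $\mu = 1$. It does hold in the present setting because $g \geq 2$ rules out the one-square configuration, and a connected intersection graph with either an edge of weight $\geq 2$ or at least two edges has Perron--Frobenius eigenvalue at least $\sqrt{2}$. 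Third, the cone singularities are located at the single vertex to which each complementary $2k$-gon collapses (cone angle $k\pi$, singular for $k \geq 3$, a marked point when the region is once punctured), not ``at the corners'' of those polygons; this is mostly cosmetic, but it is exactly where your stated worry about assembling the cylinder shears is resolved, since the shears agree along the horizontal saddle connections and fix the singular points.
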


Consider a mapping class $T_{A} T_{B}^{-1} \in \left< T_A, T_B \right>$ as given by Theorem \ref{thm:ThurCons}. The image of $T_A T_B^{-1}$ under $\rho$ is given by \[\begin{pmatrix} 1 & - \mu^{1/2} \\ 0 & 1 \end{pmatrix}\begin{pmatrix} 1 & 0 \\ \mu^{1/2} & 1 \end{pmatrix}^{-1} = \begin{pmatrix} \mu + 1 & - \mu^{1/2} \\ - \mu^{1/2} & 1 \end{pmatrix}.\] The trace of this matrix is $2 + \mu$. Thus, by Theorem \ref{thm:ThurCons}, $T_{A}T_{B}^{-1}$ is pseudo-Anosov and $\log(\lambda(T_A T_B^{-1}))$ is bounded above by $\log(2 + \mu)$. 

The real number $\mu$ in Theorem \ref{thm:ThurCons} is the Perron--Frobenius eigenvalue of $NN^T$, where $N$ is defined as $N_{i,j} = i(\alpha_i, \beta_j).$ If $A = \{ \alpha \}$ and $B = \{ \beta \}$, then $\mu = i(\alpha, \beta)^2$. This will be a useful fact to keep in mind for the following section. In general, $\mu$ cannot be computed in such a straightforward manner. However, we can bound $\mu$ from above by the maximum row sum of $NN^T$; see \cite{gantmacher59}.

In order to compute the row sums of $NN^T$ we will follow the method used in \cite{ALM16}, which we describe here. Given $N$, we can build a labeled bipartite graph $G$ with $m$ red vertices and $k$ blue vertices corresponding to the multicurves $A$ and $B$, respectively. An edge from the $i$th red vertex to the $j$th blue vertex exists if $N_{i,j} \neq 0$, in which case it is labeled by $N_{i,j}$. We will define the {\bf weight} of a path in $G$ to be the product of edge labels in that path. The $(i, j)$ entry of $NN^T$ is equal to the sum of the weights of the paths of length 2 from the $i$th red vertex to the $j$th red vertex in $G$. To compute the row sum of $NN^T$ corresponding to a particular curve we start at the vertex associated to that curve and sum the weights of all paths of length two, possibly with backtracking, beginning at that vertex. 

\section{The Point Pushing Subgroup}
\label{section:pointpushing}

The construction given by Aougab--Taylor in \cite{aougab15} of point pushing homeomorphisms used to realize the upper bound in Theorem \ref{thm:dat} will play an important role in our proof of the Main Theorem so we will recall it here. We also employ some further work of Aougab--Huang \cite{aougabhuang14} to gain a more careful estimate of the upper bound than that provided in \cite{aougab15}. In particular, we will prove the following. 

\begin{thm}[Aougab--Taylor]\label{thm:aougabtaylor} For the closed surface $S_g$ of genus $g \geq 2$, \[L(\PB_1(S_g)) < 4\log(g)  + 2 \log(24).\]\end{thm}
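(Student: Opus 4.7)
The plan is to use Thurston's construction (Theorem~\ref{thm:ThurCons}) to exhibit an explicit pseudo-Anosov element of the point-pushing subgroup $\PB_1(S_g)$, whose dilatation is then bounded using a result of Aougab--Huang \cite{aougabhuang14} on the minimum self-intersection number of a filling closed curve on $S_g$.

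Concretely, I would start with a filling closed curve $\gamma \subset S_g$ based at the puncture $p$ whose self-intersection number $\iota(\gamma)$ is as small as possible. Taking a thin regular neighborhood of $\gamma$ in $S_{g,1}$ and pushing $\gamma$ off to either side of $p$ produces a pair of multicurves $A, B$ of pairwise disjoint simple closed curves on $S_{g,1}$. A direct computation shows that $T_A T_B^{-1}$ lies in $\PB_1(S_g)$: when the puncture is filled in, corresponding curves in $A$ and $B$ become isotopic, so the Dehn twists cancel under the forgetful map. Moreover, because $\gamma$ fills $S_g$, the union $A \cup B$ fills $S_{g,1}$, so Theorem~\ref{thm:ThurCons} applies and yields
\[
\log \lambda(T_A T_B^{-1}) \leq \log(2+\mu),
\]
where $\mu$ is the Perron--Frobenius eigenvalue of $NN^T$ for $N_{ij} = i(A_i,B_j)$.

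The next step is to bound $\mu$ by the maximum row sum of $NN^T$, computed using the bipartite-graph weighting method recalled after Theorem~\ref{thm:ThurCons}. A local inspection at each self-intersection of $\gamma$ yields an explicit polynomial bound for $\mu$ in terms of $\iota(\gamma)$; substituting the Aougab--Huang estimate on the minimum self-intersection number of a filling curve on $S_g$ then produces the claimed inequality $\log \lambda \leq 4\log(g) + 2\log(24)$.

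The main obstacle is the sharp bookkeeping of constants: one must track how many $A$-$B$ intersections are generated per self-intersection of $\gamma$, propagate this through the row-sum bound on $\mu$, and combine it with the precise form of the Aougab--Huang estimate in order to recover the specific coefficients $4$ and $2\log(24)$. A secondary subtlety, when $\gamma$ is not simple, is verifying that pushing off to either side of $p$ really produces multicurves with the claimed filling and forgetful-kernel properties; this likely requires a ribbon-graph description of $\gamma$ rather than a naive annular neighborhood, with care taken at each crossing to ensure that $T_A T_B^{-1}$ is genuinely the desired point push and not a point push composed with some residual mapping class.
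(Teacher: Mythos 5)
There is a genuine gap, and it is not the ``secondary subtlety'' you flag at the end --- it is the central problem. Your multicurves $A$ and $B$ are supposed to come from pushing a \emph{filling} based loop $\gamma$ off to its two sides. But a filling loop on $S_g$ is necessarily non-simple, and the boundary of a ribbon (regular) neighborhood of a filling curve consists of simple closed curves that bound precisely the complementary disks (and the once-punctured disk containing $p$); these curves are therefore null-homotopic or peripheral in $S_{g,1}$. Twisting about them is trivial in $\Mod(S_{g,1})$, and in any case Theorem \ref{thm:ThurCons} requires collections of pairwise disjoint \emph{essential} simple closed curves whose union fills, so the construction cannot even be set up. If instead you interpret ``pushing off to either side of $p$'' as perturbing $\gamma$ near the basepoint, the resulting curves are still non-simple and again cannot serve as the multicurves in Thurston's construction. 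For a simple loop the identity $P(\beta) = T_{\beta_1}T_{\beta_2}^{-1}$ does hold, but then $\beta$ cannot fill and $\{\beta_1,\beta_2\}$ do not fill $S_{g,1}$, so no pseudo-Anosov is produced that way either. A further mismatch: the Aougab--Huang input used for this bound is that a minimally intersecting \emph{filling pair} of simple closed curves on $S_g$ satisfies $i(\alpha,\beta)=2g-1$, not an estimate on the self-intersection number of a single filling curve; and known bounds for the dilatation of the point push along a filling loop in terms of its self-intersection number are exponential in that number, which would only give a bound linear in $g$, not the desired $4\log(g)+2\log(24)$.

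The paper's route circumvents exactly this obstruction. One takes a filling pair of \emph{simple} closed curves $\alpha,\beta$ on $S_g$ with $i(\alpha,\beta)=2g-1$, punctures at a point of $\beta\setminus\alpha$, and uses the point push along the simple loop $\beta$ (three times), $f_\beta = T_{\beta_1}^3T_{\beta_2}^{-3}$, only as an auxiliary map to move $\alpha$. Thurston's construction is then applied to the two simple closed curves $\alpha$ and $f_\beta(\alpha)$: Ivanov's inequality (Lemma \ref{lemma:ivanov}) shows they fill $S_{g,1}$ and gives $i(\alpha,f_\beta(\alpha))\le 24g^2-24g+6$, while the fact that $f_\beta$ is a point push makes $\alpha$ and $f_\beta(\alpha)$ isotopic on $S_g$, so $T_\alpha T_{f_\beta(\alpha)}^{-1}\in\PB_1(S_g)$; the two-curve case of Thurston's construction then gives $\lambda \le i(\alpha,f_\beta(\alpha))^2+2 < 24^2g^4$, i.e.\ the stated bound. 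If you want to keep the spirit of your proposal, you should replace ``push-offs of a filling loop'' by this mechanism: produce \emph{two simple closed curves that fill the punctured surface and are isotopic to each other once the puncture is filled in}, which is exactly what applying a point push along one curve of a filling pair to the other curve accomplishes.
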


\begin{proof}[Proof of Theorem \ref{thm:aougabtaylor}] Let $\alpha$ and $\beta$ be a minimally intersecting filling pair of curves on the closed surface $S_{g}$. By \cite{aougabhuang14}, we have that $i(\alpha, \beta) = 2g -1$. Let $\beta_1, \beta_2$ be the boundary components of a small tubular neighborhood of $\beta$. Thus, $\beta_1, \beta_2$ are homotopic to $\beta$ on $S_g$. Now place a marked point $z$ at some point of $\beta \setminus \alpha$. We can puncture $S_g$ at $z$ to form the surface $S_{g, 1}.$ 

Set $f_{\beta} = T_{\beta_1}^3 \circ T_{\beta_2}^{-3}$. This is a point pushing map in $S_{g,1}$ obtained by pushing the marked point $z$ along $\beta$ three times. Our goal is to show that $\{ \alpha, f_{\beta}(\alpha)\}$ fills the punctured surface $S_{g,1}$, and then apply Theorem \ref{thm:ThurCons} to obtain a pseudo-Anosov mapping class in $\PB_1(S_g)$. We apply the following inequality of Ivanov found in \cite{Ivanov92} to show that any essential simple closed curve on $S_{g,1}$ must intersect either $\alpha$ or $f_{\beta}(\alpha)$. 

\begin{lemma}[Ivanov] \label{lemma:ivanov} Let $c_1, \ldots c_m$ be a collection of pairwise disjoint, pairwise non-homotopic simple closed curves on a surface $S$ with negative Euler characteristic and let $(s_1, \ldots, s_m) \in \mathbb Z^m$. For any simple closed curves $\gamma, \rho,$ \begin{align*} \sum_{i = 1}^{m} (\left | s_i \right | - 2) i(\rho, c_i) i(c_i, \gamma) - i(\rho, \gamma) & \leq i(T_{c_1}^{s_1} \circ \cdots \circ T_{c_m}^{s_m}(\rho), \gamma) \\ & \leq \sum_{i =1}^{m} \left | s_i \right | i(\rho, c_i) i(c_i, \gamma) + i(\gamma, \rho). \end{align*} \end{lemma}

Suppose $\gamma$ is an essential simple closed curve on $S_{g,1}$ such that $i(\gamma, \alpha) = 0$. Now we can apply Lemma \ref{lemma:ivanov} with $\rho = \alpha$, $(s_1, s_2) = (3, -3)$, and $(c_1, c_2) = (\beta_1, \beta_2)$. Recall that $\alpha$ and $\beta$ filled $S_g$, so $\{ \alpha, \beta_1, \beta_2 \}$ fill $S_{g,1}$. Thus, $i(\gamma, \beta_i) \neq 0$ for $i = 1,2$, which implies that the lefthand side of the inequality in Lemma \ref{lemma:ivanov} is nonzero. Hence, $i(\gamma, f_{\beta}(\alpha)) \neq 0$, as desired. Furthermore, we can use the fact that $i(\alpha, \beta) = 2g -1$, together with Lemma \ref{lemma:ivanov}, to calculate that $i(\alpha, f_{\beta}(\alpha)) \leq 24g^2 - 24g + 6$.

Since $f_{\beta}$ is a point pushing map, we know that $\alpha$ and $f_{\beta}(\alpha)$ are homotopic on the closed surface $S_g$. Thus, $T_{\alpha} T_{f_{\beta}(\alpha)}^{-1} \in \PB_1(S_g)$ and, by Theorem \ref{thm:ThurCons}, is also pseudo-Anosov. Recall that in the case of two filling curves Theorem \ref{thm:ThurCons} tells us that the $\lambda(T_{\alpha} T_{f_{\beta}(\alpha)}^{-1}) \leq i(\alpha, f_{\beta}(\alpha))^2 + 2.$ Thus, $\lambda(T_{\alpha} T_{f_{\beta}(\alpha)}^{-1}) < 24^2 g^4$ and we obtain the desired upper bound \[ L(\PB_1(S_g)) < 4 \log(g) + 2 \log(24). \qedhere \] \end{proof}

As we will need this construction in the next section, we will denote the curves $\alpha$ and $f_{\beta}(\alpha)$ which we constructed above by $\alpha$ and $\tau$, respectively, and call them an {\bf Aougab--Taylor pair}. Note that we can construct an Aougab--Taylor pair $\{ \alpha, \tau \}$ on a surface of genus $g$ with a single boundary component with the same bound of $24g^2 - 24g + 6$ on intersection number, since on a surface of genus $g >2$ with a single boundary component there exists a pair of filling curves that intersect $2g-1$ times. In the case of a genus two surface with a single boundary component a minimally intersecting pair of filling curves will intersect $4$, not $3$, times. However we can still construct an Aougab--Taylor pair $\{ \alpha, \tau \}$ with $i(\alpha, \tau) \leq 24 .$ When our surface is a torus  with a single boundary component, we can construct an Aougab--Taylor pair $\{ \alpha, \tau \}$ with $i (\alpha, \tau) = 6$. 

\section{The Upper Bounds}
\label{section:upperbounds}

We will begin by proving the Main Theorem's upper bound which depends on the genus $g$ and number of punctures $n$ of our surface. We state this upper bound with explicit constants in Theorem \ref{thm:upperBound}. To prove the upper bound it suffices to construct a pseudo-Anosov pure braid satisfying the desired upper bound for each $g$ and $n$.  

\begin{thm} \label{thm:upperBound} For a surface $S_{g}$ of genus $g \geq 2$ with $1 \leq n \leq 2g$, we have \[L(\PB_n(S_g)) \leq 4 \log \left( \left \lceil \frac{2g}{n} \right \rceil \right) + 4 \log(7).\]\end{thm}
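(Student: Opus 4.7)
The goal is to construct an explicit pseudo-Anosov in $\PB_n(S_g)$ of dilatation at most $(7\lceil 2g/n\rceil)^4$ to which Thurston's Theorem~\ref{thm:ThurCons} applies. Set $h\defeq\lceil 2g/n\rceil$. I would first decompose $S_g$ into $n$ one-punctured subsurfaces $\Sigma_1,\ldots,\Sigma_n$ of genus $h_i\leq h$ glued along an essential multicurve $\Gamma=\{\gamma_1,\ldots,\gamma_N\}$; a chain (``necklace'') decomposition with $\sum h_i=g$ gives $h_i\leq\lceil g/n\rceil\leq h$, with each $h_i\geq 1$ whenever $n\leq g$, while the range $g<n\leq 2g$ is handled by placing two punctures in some of the resulting genus-one pieces. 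In each $\Sigma_i$ I would apply the surface-with-boundary variant of the Aougab--Taylor construction noted in the remarks after Theorem~\ref{thm:aougabtaylor} to produce a pair $(\alpha_i,\tau_i)\subset\operatorname{int}(\Sigma_i)$ with $i(\alpha_i,\tau_i)\leq 24h_i^2-24h_i+6\leq 24h^2$, and such that $\alpha_i$ and $\tau_i$ become isotopic upon capping off $\partial\Sigma_i$ and filling in the puncture.

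Next I would assemble the multicurves $A=\{\alpha_1,\ldots,\alpha_n\}\cup\Gamma_A$ and $B=\{\tau_1,\ldots,\tau_n\}\cup\Gamma_B$ with $\Gamma_A=\{\gamma_1,\ldots,\gamma_N\}$ and $\Gamma_B=\{\gamma_1',\ldots,\gamma_N'\}$, where each $\gamma_j'$ is a simple closed curve on $S_{g,n}$ that is isotopic to $\gamma_j$ on the closed surface $S_g$ but cobounds with $\gamma_j$ an annulus containing exactly one puncture on $S_{g,n}$ (for instance, $\gamma_j'$ is $\gamma_j$ isotoped across that puncture). Then $\gamma_j$ and $\gamma_j'$ have positive intersection on $S_{g,n}$, so $\Gamma$ is not a collection of isolated essential curves of $A\cup B$, and one verifies that $A\cup B$ fills $S_{g,n}$: every essential simple closed curve on $S_{g,n}$ is either isotopic into some $\Sigma_i$ (where $\alpha_i\cup\tau_i\cup\partial\Sigma_i$ fills), or crosses some $\gamma_j\in A$, or is isotopic to some $\gamma_j$ (hence intersects $\gamma_j'\in B$). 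Meanwhile $T_{\gamma_j}T_{\gamma_j'}^{-1}$ is a point-push of the puncture enclosed between $\gamma_j$ and $\gamma_j'$, and $T_{\alpha_i}T_{\tau_i}^{-1}$ is a point-push of the puncture in $\Sigma_i$, so $T_AT_B^{-1}$ is a product of point-pushes lying in $\PB_n(S_g)$.

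Applying Theorem~\ref{thm:ThurCons}, the intersection matrix $N_{ij}=i(A_i,B_j)$ is block-diagonal, with an $\alpha$-$\tau$ diagonal block of entries $\leq 24h^2$ and a $\Gamma_A$-$\Gamma_B$ diagonal block of entries $i(\gamma_j,\gamma_j')$ bounded by a small constant. Hence the Perron--Frobenius eigenvalue of $NN^T$ satisfies $\mu\leq(24h^2)^2=576\,h^4<(7h)^4-2$, so $\lambda(T_AT_B^{-1})\leq\mu+2<(7h)^4$ and $L(\PB_n(S_g))\leq 4\log\lceil 2g/n\rceil+4\log 7$. The main obstacle is the multicurve assembly: a naive choice of parallel push-offs for $\Gamma_B$ fails the filling property, since each $\gamma_j$ would then remain an isolated essential curve on $S_{g,n}$, so the point-push trick from Section~\ref{section:pointpushing} must be redeployed at each seam $\gamma_j$ while keeping the resulting cross-terms in $N$ controlled, and the sub-case $g<n\leq 2g$ requires a slightly modified decomposition to avoid genus-zero pieces.
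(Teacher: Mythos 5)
Your assembly at the seams does not work, and the error is concrete. If $\gamma_j'$ cobounds with $\gamma_j$ an embedded annulus containing one puncture (``$\gamma_j$ isotoped across that puncture''), then $\gamma_j$ and $\gamma_j'$ are \emph{disjoint}: non-isotopy in $S_{g,n}$ does not force intersection, and $i(\gamma_j,\gamma_j')=0$. Consequently each seam curve $\gamma_j\in A$ has zero geometric intersection with every curve of $B$ (the $\tau_i$ lie in the interiors of the $\Sigma_i$, the other $\gamma_k'$ are disjoint from $\gamma_j$), so $\gamma_j$ itself witnesses that $A\cup B$ does not fill $S_{g,n}$ and Theorem~\ref{thm:ThurCons} does not apply -- exactly the ``isolated seam curve'' failure you flagged, which your fix does not actually repair. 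Moreover, even granting some repaired choice of $\gamma_j'$, to enclose a puncture it must travel from a neighborhood of $\partial\Sigma_i$ into the once-punctured complementary region of the Aougab--Taylor pair, hence cross $\alpha_i\cup\tau_i$: crossing $\tau_i$ destroys the disjointness of the multicurve $B$ required by Thurston's construction, while crossing $\alpha_i$ destroys the block-diagonality of $N$ on which your bound $\mu\leq 576\,h^4$ rests, and those uncontrolled cross-terms are potentially of size comparable to $i(\alpha_i,\tau_i)$. The structural obstruction is that your decomposition into $n$ once-punctured pieces leaves no spare punctures near the seams to play with.

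The paper's construction is arranged precisely to avoid this. It uses only about $n/2$ subsurfaces (which is why the genus per piece, and hence the bound, involves $\lceil 2g/n\rceil$ rather than $\lceil g/n\rceil$), places \emph{two} punctures in the relevant regions -- one deep inside for the Aougab--Taylor pair and one near the boundary -- and does not put the decomposition curves into $A$ or $B$ at all. Instead it adjoins to $A$ curves bounding \emph{twice-punctured disks}; these are trivial on $S_g$ (so $T_AT_B^{-1}$ still dies under the forgetful map), they necessarily cross the blue curves of the adjacent pieces (which is what makes $A\cup B$ fill and rules out curves parallel to the separating curves), and their intersections with the blue curves are explicitly bounded, so the row sums of $NN^T$ can be read off a connected bipartite graph and give $\mu+2\leq 7^4\lceil 2g/n\rceil^4$. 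To salvage your approach you would need either to introduce such extra punctures and disk-bounding curves (reproducing the paper's argument), or to control the intersections of genuinely crossing seam curves with the Aougab--Taylor pairs, which your proposal does not do.
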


Fix a genus $g \geq 2$. Our main tool throughout this section will be leveraging Thurston's construction to build our desired pseudo-Anosov pure surface braids by building pairs of filling multicurves. 

\begin{proof}[Proof of Theorem \ref{thm:upperBound}] The main strategy of our proof is to divide our surface into subsurfaces with a single boundary component, fill each of these subsurfaces with an Aougab--Taylor pair, and then add a few additional curves which bound twice punctured disks to combine these Aougab--Taylor pairs into a single pair of filling multicurves. We will employ this strategy in each of our three cases: when $n = 2,3$, when $4 \leq n < 2g$, and when $n \geq 2g$.

\begin{proof}[Case 1] We begin our construction in the case of $n = 2$. Let $A$ and $B$ denote the multicurves marked in red and blue, respectively, in Figure \ref{fig:2_3punctures} which are constructed in the following way. Consider two subsurfaces of $S_{g,n}$ given by cutting along a separating curve that divides $S_{g,n}$ into two subsurfaces of genus at most $\left \lceil \frac{g}{2} \right \rceil$ each containing a single puncture. On each of these subsurfaces we can construct an Aougab--Taylor pair as described in Section \ref{section:pointpushing}. We then add an additional curve bounding a twice-punctured disk containing the pair of punctures. We illustrate this construction in Figure \ref{fig:2_3punctures} for the case of a genus 2 surface. In this situation our Aougab--Taylor pairs on each genus 1 subsurface intersect 6 times and our additional red curve, which bounds a twice-punctured disk containing the pair of punctures, intersects each blue curve 8 times. For $n=3$ we can add an additional puncture, as shown on the right of Figure \ref{fig:2_3punctures}.

\begin{figure}[H]
\centering
\includegraphics[width = 2.2in]{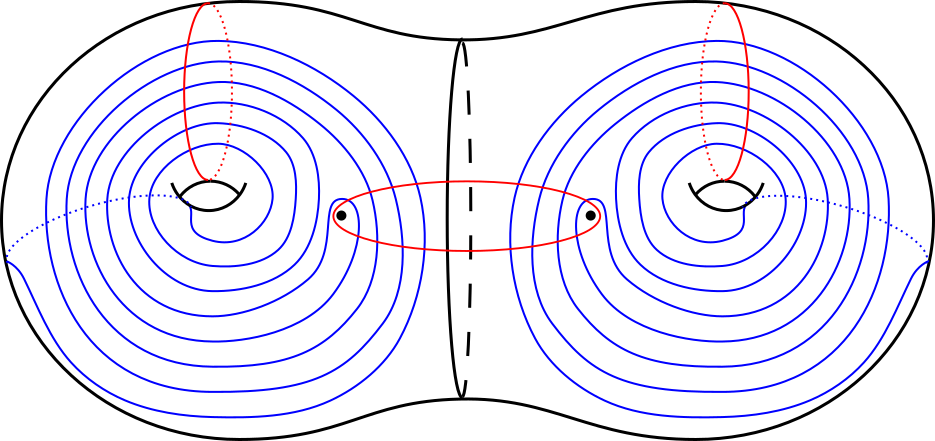} 
\hspace{5mm}
\includegraphics[width=2.2in]{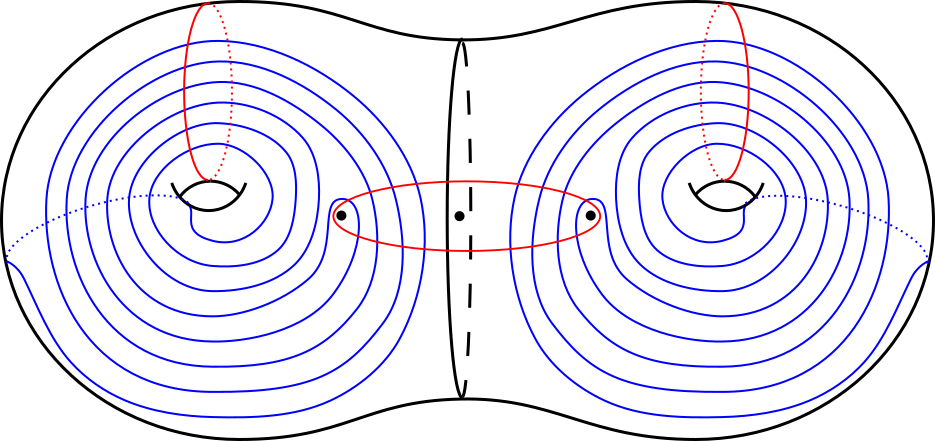}
\caption{Construction of filling multicurves, $A$ and $B$, for 2 and 3 punctures}
\label{fig:2_3punctures}
\end{figure}

Let $f = T_A T_B^{-1}$. Note that $f$ is pseudo-Anosov by Thurston's Construction, since $A$ and $B$ jointly fill $S_{g,n}$. Furthermore, $f \in \PB_n(S_g)$, since the red curve bounding the twice punctured disk is trivial on the closed surface and the pairs of curves which fill each subsurface will be homotopic to each other on the closed surface. Thus, the composition of positive and negative multitwist about $A$ and $B$ is the trivial mapping class on the closed surface. As discussed in Section \ref{subsection:thurston}, we can bound $\lambda(f)$ from above by the Perron--Frobenius eigenvalue, $\mu$, of $NN^T$. Since there are only $5$ curves in $A \cup B$ as shown in Figure \ref{fig:2_3punctures}, we can explicitly compute $\mu$. Note that the red curve which bounds a twice (or thrice) punctured disk intersects each blue curve at most $24 \left( \left \lceil \frac{g}{2} \right \rceil \right)^2 - 24\left \lceil \frac{g}{2} \right \rceil + 8$ times. So we have that $\mu \leq 3(24 \left( \left \lceil \frac{g}{2} \right \rceil \right)^2 - 24\left \lceil \frac{g}{2} \right \rceil + 8)^2 < 7^4 \left( \left \lceil \frac{g}{2} \right \rceil \right)^4 - 2,$ where $\mu$ is the Perron--Frobenius eigenvalue of $NN^{T}$ as described in Theorem \ref{thm:ThurCons}. Thus, \[\log(\lambda(f)) \leq \log(\mu + 2) \leq \log \left(7^4 \left( \left \lceil \frac{g}{2} \right \rceil \right)^4 \right) = 4 \log \left( \left \lceil \frac{g}{2} \right \rceil \right) + 4 \log(7).\]\end{proof}

\begin{proof}[Case 2] Now consider the case when $4 \leq n < 2g$. We will illustrate our construction in Figure \ref{fig:4punctures} in the case of a genus $4$ surface. We will build our pair of filling multicurves on $S_{g,n}$ in the following way. We will divide $S_g$ into a sphere with $\lfloor \frac{n}{2} \rfloor$ holes and subsurfaces of genus at most $\left \lceil \frac{2g}{n} \right \rceil$ each with a single boundary component. Puncture each subsurface once, and as before, we fill each of these subsurfaces with an Aougab--Taylor pair $\alpha$ and $\beta$, shown in red and blue, respectively, in Figure \ref{fig:4punctures}. We then add an additional puncture to each subsurface so that it is near the boundary component of that subsurface. This is illustrated in Figure \ref{fig:4punctures}. Let $A$ be the union of the $\alpha$ curves and $B$ be the union of the $\beta$ curves from our Aougab--Taylor pairs. Now view the subsurface as being arranged cyclically, as shown in Figure \ref{fig:4punctures}, and for consecutive pairs of punctures, one coming from the Aougab--Taylor pair and one a puncture added near the subsurface boundary, add a red curve to our multicurve $A$ which bounds a twice punctured disc. We have now constructed a pair of filling multicurves $A$ and $B$ which fill our surface $S_{g,n}$. 

Note that these additional bounding pair curves will each intersect with two blue curves. They will intersect with one blue curve twice and with the other blue curve at most $24 \left( \left \lceil \frac{2g}{n} \right \rceil \right)^2 - 24\left \lceil \frac{2g}{n} \right \rceil + 8$ times. The picture on the left of Figure \ref{fig:4punctures} illustrates the case of an even number of punctures and the picture on the right the case of an odd number of punctures. 

\begin{figure}[H]
\centering
\includegraphics[width = 2.25in]{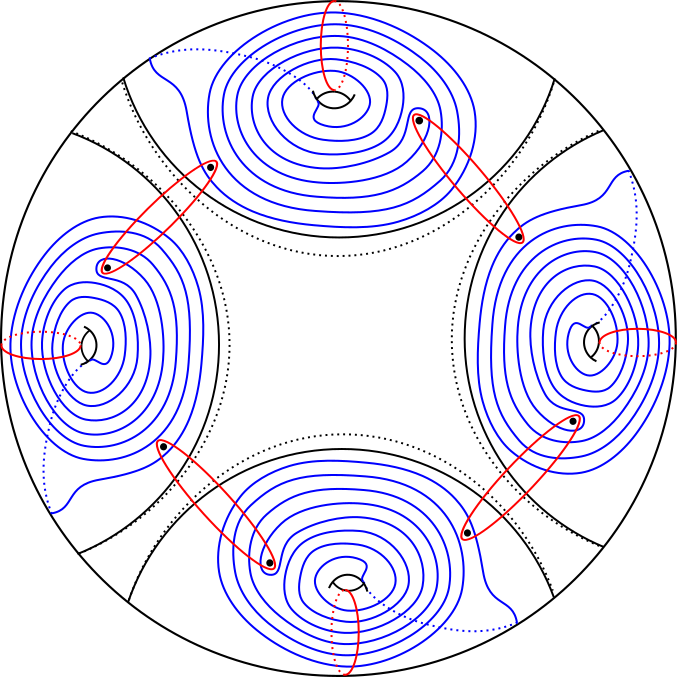}
\hspace{5mm}
\includegraphics[width = 2.25in]{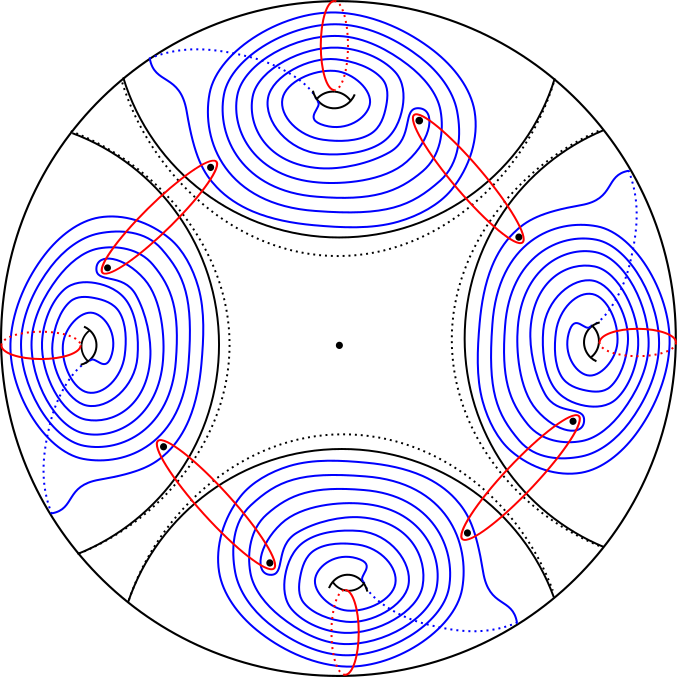}
\caption{Examples of filling multicurves, $A$ and $B$, for $4 \leq n < 2g$}
\label{fig:4punctures}
\end{figure}

Let $f = T_{A}T_{B}^{-1}$. Note that $f$ is a pseudo-Anosov pure braid for the same reasons given in Case 1. Thus, we can proceed immediately to computing the maximum row sum of $NN^{T}$ in order to bound $\lambda(f)$. We can compute the maximum row sum of $NN^T$ by considering the labeled bipartite graph in Figure \ref{fig:adjGraph1} that describes the intersection pattern of red and blue curves.

\begin{figure}[H]
\centering
\includegraphics[width = 1.85in]{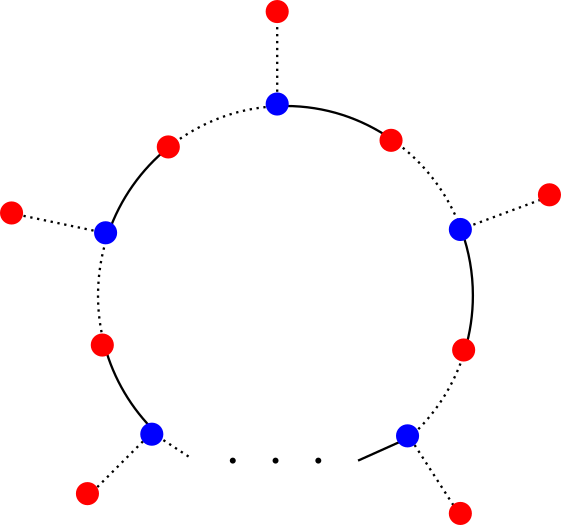}
\caption{Bipartite graph for $A$ and $B$ when $4 \leq n < 2g$}
\label{fig:adjGraph1}
\end{figure}

Note that each blue vertex has valence $3$ and each red vertex has valence at most $2$. Furthermore, the dashed edges have label at most $24 \left( \left \lceil \frac{2g}{n} \right \rceil \right)^2 - 24\left \lceil \frac{2g}{n} \right \rceil + 8$ and the solid edges have label $2$. 

Thus, for the red vertices of valence $2$ we have a corresponding row sum of at most \[2\left( 24 \left( \left \lceil \frac{2g}{n} \right \rceil \right)^2 - 24\left \lceil \frac{2g}{n} \right \rceil + 8 \right)^2 + 6\left( 24 \left( \left \lceil \frac{2g}{n} \right \rceil \right)^2 - 24\left \lceil \frac{2g}{n} \right \rceil + 8 \right) + 4.\]

For the red vertices of valence $1$ we have a corresponding row sum of at most \[2 \left( 24 \left( \left \lceil \frac{2g}{n} \right \rceil \right)^2 - 24\left \lceil \frac{2g}{n} \right \rceil + 8 \right)^2 + 2\left( 24 \left( \left \lceil \frac{2g}{n} \right \rceil \right)^2 - 24\left \lceil \frac{2g}{n} \right \rceil + 8 \right).\]

Note that each of these is at most $1152 \left( \left \lceil \frac{2g}{n} \right \rceil \right)^4 - 2 < 6^4 \left( \left \lceil \frac{2g}{n} \right \rceil \right)^4 - 2.$ Thus, the maximum row sum of $NN^T$ is bounded above by $6^4 \left( \left \lceil \frac{2g}{n} \right \rceil \right)^4 - 2$ and we have that \[\log (\lambda(f)) \leq \log (\mu + 2) \leq \log \left(6^4 \left( \left \lceil \frac{2g}{n} \right \rceil \right)^4 \right) = 4 \log \left( \left \lceil \frac{2g}{n} \right \rceil \right) + 4 \log(6).\qedhere\]\end{proof}

\begin{proof}[Case 3]Note that when $n \geq 2g$ the inequality in Theorem \ref{thm:upperBound} implies that we have a constant upper bound on $L(\PB_n(S_g))$. The construction given above is for $n < 2g$, but can be extended to give a constant upper bound as we add additional punctures. Suppose we have $n \geq 2g$. We can divide $S_g$ into subsurfaces of genus 1. We then puncture each of these subsurfaces and fill each one with an Aougab--Taylor pair, $\{ \alpha, \tau \}$, such that $i(\alpha, \tau) = 6$ using the construction in Section \ref{section:pointpushing} and continue to add punctures to the ``central" portion of $S_{g,n}$ as shown in Figure \ref{fig:constantBound} where the red curves belong to $A$ and the blue curves belong to $B$. Note that this manner of adding additional punctures does not increase the number of pairwise intersections between red and blue curves nor does it introduce any curves that have nonzero intersection with more than two other curves. 

\begin{figure}[H]
\centering
\includegraphics[width = 2.25in]{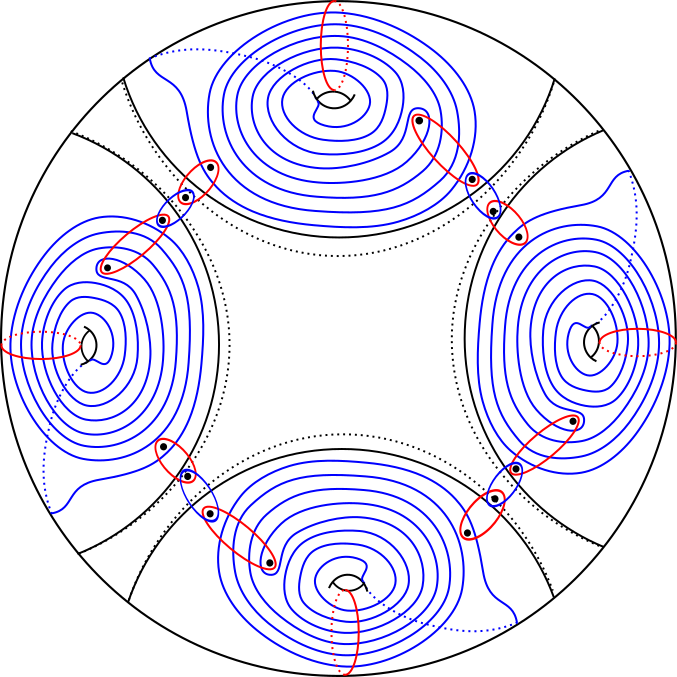}
\hspace{5mm}
\includegraphics[width = 2.25in]{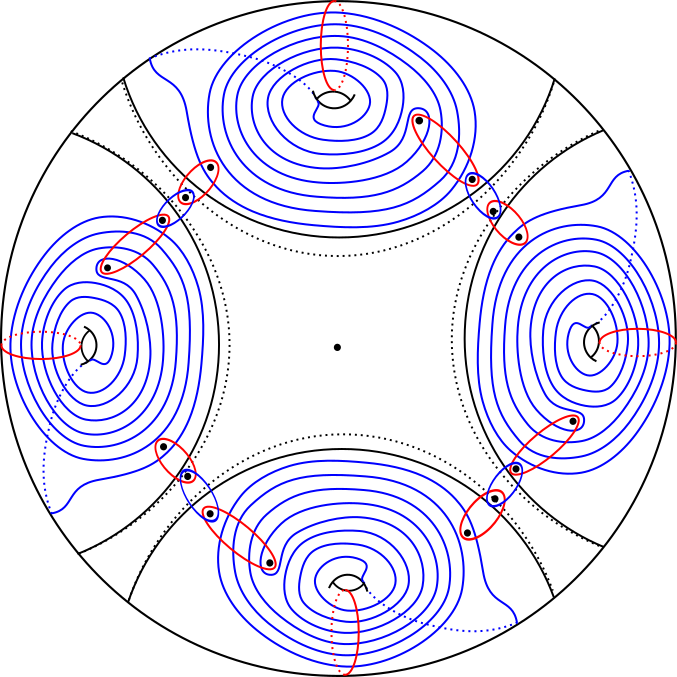}
\caption{Examples of filling multicurves, $A$ and $B$, for $n \geq 2g$}
\label{fig:constantBound}
\end{figure}

Let $f = T_{A}T_{B}^{-1}$. Note that $f$ is a pseudo-Anosov pure braid by the same reasoning used previously. Thus, just as we did before, we can proceed directly to computing the maximum row sum of $NN^{T}$ in order to bound $\lambda(f)$. We can compute the maximum row sum of $NN^T$ by considering the labeled bipartite graph in Figure \ref{fig:adjGraph2} which is constructed in the same way as the bipartite graph in Figure \ref{fig:adjGraph1}. 

\begin{figure}[H]
\centering
\includegraphics[width = 1.85in]{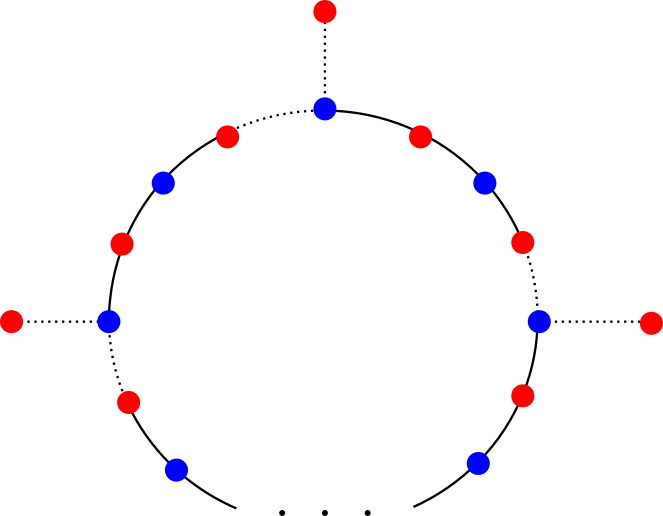}
\caption{Bipartite graph for $A$ and $B$ when $n > 2g$}
\label{fig:adjGraph2}
\end{figure}

The dashed edges in Figure \ref{fig:adjGraph2} are labeled by 8 and the solid edges are labeled by 2. Thus, we can compute that the maximum row sum of $NN^T$ is 152 and we have that $\log(\lambda(f)) <  4 \log(6).$ \end{proof}

Thus, we have addressed each of our three cases and shown that \[L(\PB_n(S_g)) \leq 4 \log \left( \left \lceil \frac{2g}{n} \right \rceil \right) + 4 \log(7),\] as desired.\end{proof}

\section{A Constant Lower Bound}
\label{section:lowerbound}

In this section we provide a constant lower bound on $L(\PB_n(S_g)$. 

\begin{thm}\label{thm:lowerbound} For a surface $S_{g}$ of genus $g \geq 2$ with $n \geq 1$, we have \[.000155 \leq L(\PB_n(S_g)).\]\end{thm}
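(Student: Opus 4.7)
The plan is to combine Theorem~\ref{thm:canonical} (the canonical isotopy) with Theorem~\ref{thm:gehring} (Kra's bound) via a universal-cover argument. Let $\varphi\in\PB_n(S_g)$ be a pseudo-Anosov representative. Theorem~\ref{thm:canonical} supplies a conformal structure on $S_g$ --- hyperbolic since $g\ge 2$, with universal cover $\mathbb H^2$ --- together with an isotopy $F_t\colon S_g\to S_g$ from $\id$ to $\hat\varphi$ through $K_t$-quasiconformal homeomorphisms satisfying $\log K_t\le 3\log\lambda(\varphi)$ for every $t\in[0,1]$.

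I would lift the isotopy equivariantly to $\tilde F_t\colon\mathbb H^2\to\mathbb H^2$ with $\tilde F_0=\id$. Each $\tilde F_t$ is a $K_t$-quasiconformal self-homeomorphism of $\mathbb H^2$, and equivariance combined with $\tilde F_0=\id$ forces $\tilde F_t$ to extend continuously to the identity on $\partial_\infty\mathbb H^2$ for every $t$. Theorem~\ref{thm:gehring} then gives, for every $x\in\mathbb H^2$ and $t\in[0,1]$,
\[
\log\lambda(\varphi)\;\ge\;\tfrac{1}{3}\log K_t\;\ge\;\tfrac{2}{3}\log\!\Bigl(1+\tfrac{1}{2}\,d_{\mathbb H^2}\bigl(x,\tilde F_t(x)\bigr)\Bigr).
\]
Consequently, any universal positive lower bound $\delta_0$ on $\sup_{x,t}\,d_{\mathbb H^2}(x,\tilde F_t(x))$ --- depending only on the fact that $\varphi$ is a nontrivial (indeed pseudo-Anosov) pure braid --- immediately yields a numerical lower bound on $\log\lambda(\varphi)$; the stated constant $0.000155$ corresponds to $\delta_0\approx 4.65\cdot 10^{-4}$.

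The main obstacle is producing such a $\delta_0$ uniformly in $g$ and $n$: naive candidates such as the hyperbolic systole of $S_g$ or the minimum separation of puncture lifts in $\mathbb H^2$ can both degenerate. My strategy is to use the pseudo-Anosov hypothesis through the configuration loop $t\mapsto(F_t(p_1),\dots,F_t(p_n))$, which represents the nontrivial element $\varphi$ of $\pi_1(\Conf(S_g,n))=\PB_n(S_g)$. Lifting to $\mathbb H^2$, either (i) some puncture's trajectory ends at a nontrivial deck translate $\delta_i\tilde p_i$, in which case a displacement is extracted from the translation length of $\delta_i$ together with a thick-thin argument handling short systoles, or (ii) every lifted trajectory is a loop at its $\tilde p_i$, so the lifted configuration is a nontrivial classical pure braid in $\Conf(\mathbb H^2,n)$ and a combinatorial disc-braid argument produces the required displacement at some auxiliary basepoint. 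A careful uniform treatment of both cases is what pins down the explicit numerical constant in the statement.
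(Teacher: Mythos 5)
Your argument has a genuine gap at exactly the step you flag: the uniform displacement bound $\delta_0$ is never established, and it is the entire content of the theorem. Everything before it (lifting the canonical isotopy of Theorem~\ref{thm:canonical}, noting the lift is the identity on $\partial_\infty\mathbb H^2$, and applying Theorem~\ref{thm:gehring} to get $\log\lambda(\varphi)\ge\frac23\log\bigl(1+\frac12\rho(x,\widetilde F_t(x))\bigr)$) is fine, but it is precisely the machinery the paper already uses for Theorem~\ref{thm:infty}, and carrying that displacement argument out carefully there yields only a bound of the form $\frac13\log\bigl(1+\frac{\log((g-2)/3)-2}{160n}\bigr)$, valid only for $g>5$ and tending to $0$ as $n\to\infty$ for fixed $g$. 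The obstruction is structural: the only displacement your plan can certify comes from the marked-point trajectories, and while their union fills $S_g$ (so has definite diameter), that diameter must be shared among $n$ trajectories, giving a displacement bound of order $\diam(\Gamma)/n$. When $n$ is large, a pseudo-Anosov pure braid can plausibly be realized by pushing each of many punctures around a very small loop, so neither your case (i) (deck-translation length, which degenerates with the systole) nor case (ii) (a nontrivial disc braid, whose required displacement degenerates as punctures cluster) produces a constant independent of $g$ and $n$. In particular, the numerical value $0.000155$ is not something your chain of inequalities computes; identifying it with "$\delta_0\approx 4.65\cdot 10^{-4}$" is reverse-engineering rather than a derivation.

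The paper's proof is entirely different and avoids geometry of the isotopy altogether: it is homological. It invokes the Agol--Leininger--Margalit inequality (Proposition~\ref{prop:AgolLeinMarg}), $\;.00031\,\frac{\kappa(f)+1}{|\chi(S_{g,n})|}\le\log\lambda(f)$, where $\kappa(f)$ is the dimension of the $f$-fixed subspace of $H_1(S_{g,n};\mathbb R)$, and then proves Lemma~\ref{lemma:kappa}: for $f\in\PB_n(S_g)$ one has $\kappa(f)\ge\max\{2g,\,n-1\}$ (for $n-1<2g$ via the mapping torus of $f$ surjecting onto $\pi_1(S_g\times S^1)$, since the filled-in map is isotopic to the identity, giving $b_1(M_f)\ge 2g+1$; for $n-1\ge 2g$ via the $(n-1)$-dimensional span of peripheral classes, which $f$ fixes pointwise). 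Since $|\chi(S_{g,n})|=2g-2+n$, this forces $\frac{\kappa(f)+1}{|\chi|}>\frac12$ for all $g\ge2$, $n\ge1$, and the constant $.000155=.00031\cdot\frac12$ follows. If you want a quasiconformal proof of a constant lower bound, you would need a displacement estimate that does not degrade with $n$, which your proposal does not supply and which the paper's own Theorem~\ref{thm:infty} suggests is not available by this route.
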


The proof of Theorem \ref{thm:lowerbound} relies on the following result of Agol--Leininger--Margalit which can be found in \cite{ALM16}. 

\begin{prop} \label{prop:AgolLeinMarg} Let $S$ be a surface and $f \in \Mod(S)$ pseudo-Anosov, then \[.00031 \left(\frac{\kappa(f) + 1}{|\chi(S)|} \right) \leq \log(\lambda(f)),\] where $\kappa(f)$ is the dimension of the subspace of $H_1(S; \mathbb R)$ fixed by $f.$ \end{prop}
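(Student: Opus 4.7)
The plan is to apply Proposition~\ref{prop:AgolLeinMarg} to an arbitrary pseudo-Anosov $f \in \PB_n(S_g)$; since $0.000155 = 0.00031/2$, it suffices to prove the purely algebraic estimate
\[
\frac{\kappa(f) + 1}{|\chi(S_{g,n})|} \geq \frac{1}{2}.
\]
The key input is that elements of $\PB_n(S_g)$, by virtue of becoming trivial after filling in the punctures, act on $H_1(S_{g,n}; \mathbb{R})$ in a very restricted way.

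To exploit this, I would work with the filling-in short exact sequence
\[
0 \longrightarrow K \longrightarrow H_1(S_{g,n}; \mathbb{R}) \longrightarrow H_1(S_g; \mathbb{R}) \longrightarrow 0,
\]
where $K$ has dimension $n-1$ and is spanned by the classes $[\delta_i]$ of small loops around the punctures $p_i$ (subject to $\sum [\delta_i] = 0$). Since $f$ lies in the kernel of $\mathcal{Forget}$, $f_*$ induces the identity on the quotient $H_1(S_g; \mathbb{R})$. Since $f$ is a pure mapping class, after choosing a representative homeomorphism that fixes each $p_i$ pointwise, the image of a small positively oriented loop around $p_i$ is again a small positively oriented loop around $p_i$, so $f_*[\delta_i] = [\delta_i]$; hence $f_*$ also acts as the identity on $K$.

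These two facts imply that $f_* - \id$ both vanishes on $K$ and takes values in $K$, so it descends to a linear map $H_1(S_g;\mathbb{R}) \to K$, whose rank is at most $\min(2g,\ n-1)$. Therefore
\[
\kappa(f) \;=\; \dim\ker(f_* - \id) \;\geq\; (2g + n - 1) - \min(2g,\ n-1) \;=\; \max(2g,\ n-1).
\]
Combined with $|\chi(S_{g,n})| = 2g + n - 2$, a short case split (on whether $n \leq 2g+1$ or $n \geq 2g+2$) shows
\[
\frac{\kappa(f)+1}{|\chi(S_{g,n})|} \;\geq\; \frac{\max(2g+1,\ n)}{2g+n-2} \;\geq\; \frac{1}{2}.
\]
Plugging this into Proposition~\ref{prop:AgolLeinMarg} then gives $\log\lambda(f) \geq 0.00031 \cdot \tfrac{1}{2} = 0.000155$.

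The main (minor) obstacle is justifying the sign in $f_*[\delta_i] = +[\delta_i]$, rather than merely $\pm[\delta_i]$. This is handled by combining that $f$ fixes each puncture (so $f(\delta_i)$ bounds a punctured disk neighborhood of $p_i$) with the fact that $f$ is orientation preserving. Everything else is routine linear algebra and the direct invocation of the Agol--Leininger--Margalit bound.
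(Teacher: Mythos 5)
Your proposal does not prove the statement you were assigned. The statement is Proposition \ref{prop:AgolLeinMarg} itself, the Agol--Leininger--Margalit inequality $.00031\left(\frac{\kappa(f)+1}{|\chi(S)|}\right) \leq \log(\lambda(f))$ for an arbitrary pseudo-Anosov $f \in \Mod(S)$; but the very first step of your argument is to \emph{apply} Proposition \ref{prop:AgolLeinMarg}, so as a proof of that proposition it is circular --- what you have actually written is a proof of the downstream result, Theorem \ref{thm:lowerbound}, the constant bound $.000155 \leq L(\PB_n(S_g))$. In the paper the Proposition is not proved but quoted from \cite{ALM16}, and its proof lives at a completely different level: it bounds the dilatation from below via the hyperbolic mapping torus of $f$, the Thurston norm and fibered faces, and the relation between $\kappa(f)+1$ and the first Betti number of that $3$-manifold. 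Nothing in your homological bookkeeping produces any lower bound on $\log(\lambda(f))$ in terms of $\kappa(f)$ and $\chi(S)$; that bound is the entire content of the statement, and it is simply assumed in your write-up.

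For what it is worth, the argument you did give is correct and is a clean variant of the paper's treatment of Lemma \ref{lemma:kappa} and Theorem \ref{thm:lowerbound}. Your observation that for $f \in \PB_n(S_g)$ the endomorphism $f_* - \id$ of $H_1(S_{g,n};\mathbb{R})$ kills the peripheral subspace $K$ and takes values in $K$, hence has rank at most $\min(2g,\,n-1)$, gives $\kappa(f) \geq \max\{2g,\,n-1\}$ in one stroke; the paper obtains the $2g$ half of this bound less directly, through $b_1(M_f)=\kappa(f)+1$, a Mayer--Vietoris computation, and the Hurewicz theorem applied to the surjection $\pi_1(M_f) \to \pi_1(S_g \times S^1)$, and the $(n-1)$ half from the peripheral subspace exactly as you do. Your handling of the sign $f_*[\delta_i]=+[\delta_i]$ and the case split giving $(\kappa(f)+1)/|\chi(S_{g,n})| > \tfrac{1}{2}$ also match the paper. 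But none of this addresses the proposition itself; to prove it you would need to reproduce (or at least cite and not assume) the Agol--Leininger--Margalit argument.
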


In order to make use of this result we must examine the action of a pure surface braid $f \in \PB_n(S_g)$ on $H_1(S_{g,n}; \mathbb R)$. We can place the following lower bound on $\kappa(f)$.

\begin{lemma} \label{lemma:kappa} If $f \in \PB_n(S_g)$, then \[\max \{2g, n-1\} \leq \kappa(f).\]\end{lemma}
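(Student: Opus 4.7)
The plan is to analyze the action of $f_*$ on $V \defeq H_1(S_{g,n};\mathbb{R})$ by exploiting the natural filtration coming from loops around the punctures. Let $\delta_1,\ldots,\delta_n$ be small simple loops encircling the $n$ punctures of $S_{g,n}$, and let $K \subseteq V$ be the subspace they span. Standard calculations give $\dim V = 2g + n - 1$, and the single relation $\sum_{i=1}^n [\delta_i] = 0$ shows $\dim K = n-1$. Moreover, the map $V \to V/K$ is naturally identified with the map on $H_1$ induced by filling in the punctures, so $V/K \cong H_1(S_g;\mathbb{R})$, a space of dimension $2g$.

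Next I would establish the key structural fact that $f_* - I$ maps $V$ into $K$ and annihilates $K$. Since $f \in \PB_n(S_g)$ fixes each puncture setwise, each $[\delta_i]$ is preserved by $f_*$, so $f_*$ acts as the identity on $K$. Since $f$ lies in the kernel of the forgetful map $\PMod(S_{g,n}) \to \Mod(S_g)$, its extension $\widehat{f}$ to $S_g$ is isotopic to the identity, and hence $\widehat{f}_*$ is the identity on $H_1(S_g;\mathbb{R})$. Naturality of the inclusion $S_{g,n} \hookrightarrow S_g$ in homology identifies the induced action of $f_*$ on $V/K$ with $\widehat{f}_*$, which is trivial. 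Therefore $(f_* - I)(V) \subseteq K$ and $(f_* - I)(K) = 0$.

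With this in hand, the two bounds fall out. The inclusion $K \subseteq \ker(f_* - I)$ immediately gives $\kappa(f) \geq \dim K = n-1$. On the other hand, because $(f_* - I)(V) \subseteq K$, the image of $f_* - I$ has dimension at most $n-1$, so by rank--nullity
\[
\kappa(f) = \dim \ker(f_* - I) \geq \dim V - (n-1) = 2g.
\]
Combining these two inequalities yields $\kappa(f) \geq \max\{2g,\, n-1\}$, as desired.

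The only delicate point is the identification $V/K \cong H_1(S_g;\mathbb{R})$ compatibly with the actions of $f_*$ and $\widehat{f}_*$; this is routine but deserves to be stated, since the whole argument rests on the fact that a pure braid acts trivially on both the "puncture" and "filled in" parts of the filtration, forcing $f_* - I$ to be two-step nilpotent.
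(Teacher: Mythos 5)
Your proof is correct, and for the $\kappa(f)\geq 2g$ half it takes a genuinely different route from the paper. The paper's argument passes through the mapping torus: it uses the identity $b_1(M_f)=\kappa(f)+1$ (via Mayer--Vietoris), the fact that $\widehat f\simeq \id$ gives $M_{\widehat f}\cong S_g\times S^1$, and a surjection $\pi_1(M_f)\twoheadrightarrow \pi_1(S_g\times S^1)$ together with Hurewicz to get $b_1(M_f)\geq 2g+1$. You instead work entirely inside $V=H_1(S_{g,n};\mathbb{R})$: the peripheral subspace $K$ (dimension $n-1$) is pointwise fixed because $f$ fixes each puncture, and $f_*$ induces the identity on $V/K\cong H_1(S_g;\mathbb{R})$ because $\widehat f\simeq\id$ and the inclusion $S_{g,n}\hookrightarrow S_g$ is equivariant; hence $f_*-I$ has image in $K$, so $\operatorname{rank}(f_*-I)\leq n-1$ and rank--nullity gives $\kappa(f)\geq \dim V-(n-1)=2g$, while $K\subseteq\ker(f_*-I)$ gives $\kappa(f)\geq n-1$ (this last step is the same as the paper's argument for the case $2g\leq n-1$). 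Your approach is more elementary and self-contained --- two naturality observations plus linear algebra, with no 3-manifold topology --- and it exposes the cleaner structural fact that $f_*-I$ is two-step nilpotent with image in the peripheral subspace; the paper's route has the advantage of sitting naturally in the Agol--Leininger--Margalit framework, where the quantity $\kappa(f)+1=b_1(M_f)$ is the relevant invariant anyway. The one point you flag as delicate --- the identification $V/K\cong H_1(S_g;\mathbb{R})$ compatible with $f_*$ and $\widehat f_*$ --- is indeed the only thing that needs to be said explicitly, and it follows from the long exact sequence of the pair $(S_g, S_{g,n})$ (or Mayer--Vietoris) together with $\iota\circ f=\widehat f\circ\iota$ for the inclusion $\iota$.
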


\begin{proof}[Proof of Lemma \ref{lemma:kappa}] Let $M_f$ denote the mapping torus of $f$ and let $b_1(M_f)$ denote the first Betti number of $M_f$ with coefficients in $\mathbb R$. Note that $b_1(M_f) = \kappa(f) + 1$. This can be obtained by an application of the Mayer--Vietoris long exact sequence. 

For $n-1 < 2g$, we will show that $b_1(M_f) \geq 2g + 1$. Since $\widehat{f}: S_g \to S_g$, obtained by filling in the punctures of $S_{g,n}$ and extending $f$ to $S_g$, is isotopic to the identity, then $M_{\widehat{f}} \cong M_{\id} \cong S_g \times S^{1}.$ Thus, there exists a map from $M_f \to S_g \times S^{1}$ that induces a surjection on the fundamental groups. By the Hurewicz Theorem, we know that $H_1(M_f; \mathbb Z)$ is isomorphic to the abelianization of $\pi_1(M_f)$. Thus, we have that $\dim (H_1(M_f; \mathbb R)) \geq \rank (\pi_1(S_g \times S^1)^{ab}) = 2g + 1$. Thus, $\kappa(f) \geq 2g$. 

For $2g \leq n-1$, observe that $f$ fixes the subspace, $\mathcal P$, of $H_1(S_{g,n})$ generated by the peripheral curves bounding each puncture because $f$ fixes each puncture. Thus, since $\mathcal P$ has dimension $n-1$, then $\kappa(f) \geq n-1.$ \end{proof}

\begin{proof}[Proof of Theorem \ref{thm:lowerbound}] By Lemma \ref{lemma:kappa}, for a pseudo-Anosov $f \in \PB_n(S_g)$, we have that $\displaystyle \frac{\kappa(f) +1}{|\chi(S_{g,n})|} > \frac12$. This, together with Proposition \ref{prop:AgolLeinMarg}, gives our desired lower bound \[.000155 \leq L(PB_n (S_g)). \qedhere \] \end{proof}

\section{A Lower Bound for Fixed Number of Punctures}
\label{section:anotherlowerbound}

We conclude with a proof of the lower bound which, for fixed $n$, goes to infinity as $g$ does. 

\begin{thm} \label{thm:infty}  If $f \in \PB_n(S_g)$ is pseudo-Anosov and $g > 5$, then \[\frac{1}{3} \log\left(1 + \frac{\log \left(\frac{g-2}{3}\right) + 2}{160n}\right) \leq \log(\lambda(f)).\] \end{thm}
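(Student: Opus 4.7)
The plan is to combine Theorem \ref{thm:canonical}, Kra's inequality (Theorem \ref{thm:gehring}), and the filling--graph diameter bound proved in the appendix. Let $\varphi \in \PB_n(S_g)$ be pseudo-Anosov with dilatation $\lambda = \lambda(\varphi)$. Applying Theorem \ref{thm:canonical} furnishes a conformal (hence hyperbolic) structure on $S_g$ together with an isotopy $F_t:S_g \to S_g$ from $\id$ to $\widehat{\varphi}$ through $K_t$-quasiconformal maps satisfying $\log K_t \leq 3\log \lambda$. For each marked point $p_i$, define $\gamma_i(t) = F_t(p_i)$; since $\widehat{\varphi}(p_i) = p_i$, each $\gamma_i$ is a loop based at $p_i$, and I write $\gamma = \bigcup_i \gamma_i([0,1])$ for the resulting graph in $S_g$.

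My first step is to verify that $\gamma$ fills $S_g$. If an essential simple closed curve $c$ in $S_{g,n}$ were disjoint from $\gamma$, then throughout the isotopy $F_t$ the moving marked points (which stay on $\gamma$) never cross $c$, so $c$ and $\widehat{\varphi}(c)$ are isotopic rel punctures. This would make $c$ a reducing curve for $\varphi$, contradicting the pseudo-Anosov hypothesis. The diameter lower bound from the appendix, applied to $\gamma$, then gives that its hyperbolic diameter in $S_g$ is at least $D := \log((g-2)/3) + 2$, which is positive precisely when $g > 5$.

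Next I convert this global bound into a pointwise displacement of a lift $\widetilde{F}_t$ to $\mathbb{H}^2$. Since $\gamma$ has diameter at least $D$, its total hyperbolic length is at least $D$, so some loop $\gamma_{i_0}$ has length at least $D/n$. The lift $t \mapsto \widetilde{F}_t(\widetilde{p}_{i_0})$ is a path in $\mathbb{H}^2$ of the same length, running from a chosen lift $\widetilde{p}_{i_0}$ to $g_{i_0}\cdot\widetilde{p}_{i_0}$ for some deck transformation $g_{i_0}$. Choosing $t_0 \in [0,1]$ appropriately along this path produces a time at which the hyperbolic displacement $d_{\mathbb{H}^2}(\widetilde{p}_{i_0},\widetilde{F}_{t_0}(\widetilde{p}_{i_0}))$ is at least $D/(Cn)$ for an absolute constant $C$. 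Theorem \ref{thm:gehring} then yields
\[ \tfrac{1}{2}\log K_{t_0} \ \geq \ \log\!\left(1 + \tfrac{D}{2Cn}\right), \]
and combining this with $\log K_{t_0} \leq 3\log \lambda$, with $C$ chosen so that $2C = 160$, produces the stated inequality.

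The principal difficulty is the step that extracts a pointwise displacement bound from the hyperbolic length of $\gamma_{i_0}$: a loop traced by a single point could return close to its starting position without producing a large displacement at any fixed time, so I must argue carefully either that the deck transformation $g_{i_0}$ itself has translation length comparable to the loop length, or that the lifted path must make a substantial excursion at some intermediate $t_0$. Once this is settled, pinning down the constant $160$ is a bookkeeping exercise combining the factor $2$ from Kra's inequality, the factor $3$ from Theorem \ref{thm:canonical}, and the partition of the diameter bound $D$ among the $n$ loops.
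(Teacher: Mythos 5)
Your skeleton matches the paper's proof — Theorem \ref{thm:canonical} for the isotopy $F_t$ with $\log K_t\le 3\log\lambda$, the lift that is the identity at infinity so that Kra's Theorem \ref{thm:gehring} applies, the observation that the traces $\gamma_i$ of the marked points fill $S_g$ (your reducing-curve argument for this is the right idea), and the appendix bound on $\diam(\Gamma)$. But the step you yourself flag as the principal difficulty is a genuine gap, and neither of your proposed repairs can close it. Once you pass from $\diam(\Gamma)$ to total hyperbolic length and then to a single loop $\gamma_{i_0}$ of length at least $D/n$, you have discarded exactly the information you need: a loop of large hyperbolic length can lie in an arbitrarily small ball (it may wind many times in a tiny neighborhood, or be a long null-homotopic loop), so the displacement $d_{S_g}\bigl(z_{i_0},F_t(z_{i_0})\bigr)$ can stay small for every $t$. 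Correspondingly, the lifted path from $\widetilde p_{i_0}$ to $g_{i_0}\widetilde p_{i_0}$ need not make any substantial excursion at an intermediate time, and $g_{i_0}$ may be trivial or have very short translation length; nothing about the length of $\gamma_{i_0}$ forces either of your two suggested conclusions. So the displacement bound that you feed into Kra's inequality is not established.

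The paper's proof never uses length; it uses diameters and the connectedness of $\Gamma$. Each loop $\gamma_i$ is contained in the ball of radius $r_i:=\max_{t}d_{S_g}\bigl(z_i,F_t(z_i)\bigr)$ about $z_i$, hence $\diam(\gamma_i)\le 2r_i$; since $\Gamma=\gamma_1\cup\cdots\cup\gamma_n$ is connected (being the $1$-skeleton of a cell decomposition), $\diam(\Gamma)\le\sum_i\diam(\gamma_i)\le 2n\max_i r_i$, so for some $i$ and some $t$ the marked point $z_i$ is displaced by at least $\diam(\Gamma)/(2n)$ — a genuine pointwise displacement, which then goes into $\varkappa$ after the factor-$2$ comparison between the Poincar\'e and hyperbolic metrics. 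That is the missing idea. Secondarily, your constants are off: Theorem \ref{thm:appendix} gives $\diam(\Gamma)\ge\bigl(\log\tfrac{g-2}{3}-2\bigr)/40$, not $\log\tfrac{g-2}{3}+2$ (the proof in the paper in fact carries the $-2$ throughout), and the $160$ arises as $40\cdot 2\cdot 2$ (the appendix constant, the $2n$ in the diameter-to-displacement step, and the $2$ in Kra's bound $\log(1+t/2)\le\varkappa(t)$), while the $\tfrac13$ comes from Theorem \ref{thm:canonical}; this bookkeeping only becomes routine after the displacement step is done as above.
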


\begin{proof}[Proof of Theorem \ref{thm:infty}] By Theorem \ref{thm:canonical}, we have a hyperbolic/conformal structure on $S_g$ and an isotopy $F_t$ through quasiconformal maps from the identity to $f$ such that for each $t$ the quasiconformal constant, $K_t$, satisfies \[\log(K_t) \leq 3 \log(\lambda(f)).\] Choose a lift, $\widetilde{F_t}$, of $F_t$ to the universal cover, $\mathbb H^2$, of $S_g$ so that $\widetilde{F_0}$ is the identity. Therefore, $\widetilde{F_t}$ is the identity on the circle at infinity. Thus, we can apply Theorem \ref{thm:gehring} and Theorem \ref{thm:canonical} to see that \[\varkappa \left(\max_{x \in \mathbb H^2} \rho(x, \widetilde{F_t}(x)) \right) \leq \frac{1}{2} \log(K_t)  \leq \frac{3}{2} \log(\lambda(f)).\] 
Since this holds for all $t \in [0,1],$ we have \[\varkappa \left(\max_{t \in [0,1]} \max_{x \in \mathbb H^2} \rho(x, \widetilde{F_t}(x))\right) \leq \frac{3}{2} \log(\lambda(f)).\] 
Note that when measuring distance on the surface we are using the hyperbolic metric, denoted $d_{S_g}$, and in the hyperbolic plane we are using the Poincar\`e metric, denoted $\rho$, which is one-half the hyperbolic metric. Thus, the covering map $\pi:\mathbb H^2 \to S_g$ is $2$-Lipschitz and for all $x \in \mathbb H^2$, \[d_{S_g}(\pi(x), F_t(\pi(x))) \leq 2 \rho(x, \widetilde{F_t}(x)).\] 
So we have that \[\varkappa \left(\max_{t \in [0,1]} \max_{x \in S_g} d_{S_g}(x, F_t(x)) \right) \leq 3 \log(\lambda(f)).\] 
If $\{z_1, \ldots, z_n\}$ are the marked points of $S_{g}$ such that $S_{g,n} = S_g \setminus \{z_1, \ldots, z_n\}$, then for each $i$, $\gamma_i: t \mapsto F_t(z_i)$, with $t \in [0,1]$, is a closed curve. Since $f$ is pseudo-Anosov, $\gamma_1 \cup \cdots \cup \gamma_n$ fills $S_g$. These $n$ curves define the $1$-skeleton, $\Gamma$, of a cell decomposition of $S_g$. 
Thus, for some $i$, \[\frac{\diam(\Gamma)}{n} \leq 2 \max_{t \in [0,1]} d_{S_g}(z_i, F_t(z_i)).\] 
By Theorem \ref{thm:appendix}, $\frac{\log \left(\frac{g-2}{3}\right)- 2}{40n}\leq \frac{\diam(\Gamma)}{n}.$ 
By Theorem \ref{thm:gehring}, $\varkappa$ is strictly increasing, so we have that \[\varkappa \left(\frac{\log \left(\frac{g-2}{3}\right)-2}{80n} \right) \leq \varkappa \left(\frac{\diam(\Gamma)}{2n} \right) \leq \varkappa \left(\max_{t \in [0,1]} d_{S_g}(z_i, F_t(z_i)) \right) \leq 3 \log(\lambda(f)).\] 
Since, by Theorem \ref{thm:gehring}, $\log \left(1 + \frac{\log \left(\frac{g-2}{3}\right) - 2}{160n} \right) \leq \varkappa \left( \frac{\log \left(\frac{g-2}{3}\right)- 2}{80n} \right)$, then we have that \[\frac{1}{3}\log \left(1 + \frac{\log \left(\frac{g-2}{3}\right) - 2}{160n} \right) \leq \log(\lambda(f)),\] as desired. \end{proof}

\appendix

\section{Diameter of a Graph Embedded in a Surface}

Let $S$ be a closed genus $g \geq 2$ hyperbolic surface and let $\Gamma$ be the $1$-skeleton of a cell decomposition of $S$. Our goal in this appendix is to provide a lower bound on the diameter of $\Gamma$, which we define as \[\diam(\Gamma) = \max_{x, y \in \Gamma} d_{S}(x,y).\] This lower bound is a crucial piece of the proof of Theorem \ref{thm:infty}. For a result related to Theorem \ref{thm:appendix}, see \cite{martelli2017}.

\begin{thm} \label{thm:appendix} Let $\Gamma$ be an embedded graph in $S$ such that $S \setminus \Gamma$ is a collection of disks. If $g > 5$, then \[\frac{\log \left(\frac{g-2}{3}\right) - 2}{40} \leq \diam(\Gamma).\] \end{thm}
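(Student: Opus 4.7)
My plan is to derive the bound via a volume-growth argument in the universal cover of $S$, leveraging that the $1$-skeleton $\Gamma$ must carry $\pi_1(S_g)$ topologically.

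First, since $S \setminus \Gamma$ is a disjoint union of open disks, $\Gamma$ is the $1$-skeleton of a cell decomposition, so the inclusion $\Gamma \hookrightarrow S$ induces a surjection on fundamental groups. Consequently, the preimage $\widetilde{\Gamma} \subset \mathbb{H}^2$ of $\Gamma$ under the universal covering $\pi : \mathbb{H}^2 \to S$ is connected. Fix $x \in \Gamma$ with lift $\tilde{x} \in \widetilde{\Gamma}$, and write $D = \diam_S(\Gamma)$. Since $\Gamma \subset B_S(x, D)$, the preimage satisfies
\[
\widetilde{\Gamma} \subset \pi^{-1}\!\bigl(B_S(x, D)\bigr) = \bigcup_{\gamma \in \pi_1(S)} B_{\mathbb{H}^2}(\gamma \tilde{x}, D).
\]

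Next, connectedness of $\widetilde{\Gamma}$ forces the nerve of this cover of balls (vertices are deck transformations, edges record overlapping balls) to be connected. Hence the symmetric set $N := \{\gamma \in \pi_1(S) : d_{\mathbb{H}^2}(\tilde{x}, \gamma \tilde{x}) \leq 2D\}$ generates $\pi_1(S_g)$. Since $\pi_1(S_g)$ cannot be generated by fewer than $2g$ elements, we obtain the lower bound $|N| \geq 2g$.

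For the upper bound on $|N|$, I plan to pack the orbit $\pi_1(S_g) \cdot \tilde{x} \cap B_{\mathbb{H}^2}(\tilde{x}, 2D)$ by disjoint translates of a Dirichlet fundamental domain $\mathcal{D}$ for $\pi_1(S_g)$, each of area $4\pi(g - 1)$. Each $\gamma \in N$ yields a translate $\gamma \mathcal{D} \subset B_{\mathbb{H}^2}(\tilde{x}, 2D + \diam(\mathcal{D}))$, and summing areas gives
\[
|N| \cdot 4\pi(g - 1) \leq 2\pi\bigl(\cosh(2D + \diam(\mathcal{D})) - 1\bigr).
\]
Combining with $|N| \geq 2g$ produces an inequality of the form $2g(g - 1) \lesssim \cosh(2D + \diam(\mathcal{D}))$, which upon taking $\cosh^{-1}$ and solving yields a lower bound on $D$ of order $\log g$ with explicit constants.

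The main obstacle will be handling the diameter of the Dirichlet domain, which in general depends on $\diam(S)$ and on the injectivity radius of $S$ (both of which can be troublesome in thin surfaces). To get a bound independent of such geometric features, I expect to either choose $\tilde{x}$ carefully (so that orbit points are separated in a controlled way), refine the packing estimate by treating collars around short geodesics separately via the Margulis lemma, or augment the argument with an isoperimetric bound on each complementary disk of $\Gamma$ (relating boundary length in $\Gamma$ to face area in $\mathbb{H}^2$). The factor $1/40$, the shift $(g - 2)/3$, and the additive $-1/20$ in the stated bound all arise from careful bookkeeping through these estimates, including the $\cosh$-to-$\log$ inversion and the linear error introduced by $\diam(\mathcal{D})$.
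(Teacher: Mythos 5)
There is a genuine gap, and it sits exactly where you flag ``the main obstacle'': the additive error $\diam(\mathcal{D})$ is not a bookkeeping nuisance but the whole problem. The theorem must hold for an arbitrary hyperbolic metric on $S_g$ (in the application the metric comes from a conformal structure you do not get to choose), and the Dirichlet domain satisfies only $\diam(\mathcal{D})\le 2\diam(S)$, where $\diam(S)$ is unbounded in terms of $g$ when the surface has thin parts (a collar about a geodesic of length $\epsilon$ has width about $2\log(1/\epsilon)$). Worse, even for the most favorable thick surfaces one has $\diam(S)\gtrsim \log g$ by the area bound, so your packing inequality $|N|\cdot 4\pi(g-1)\le 2\pi\bigl(\cosh(2D+\diam(\mathcal{D}))-1\bigr)$ with $|N|\ge 2g$ gives only $2D+\diam(\mathcal{D})\gtrsim 2\log g$, and since $\diam(\mathcal{D})$ can itself be of order $2\log g$, the conclusion about $D$ can be vacuous even in the best case. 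The proposed repairs (choosing $\tilde{x}$ well, Margulis collars, isoperimetry on faces) are not carried out, and each is nontrivial: for instance, near a short geodesic the ball of radius $2D$ about $\tilde{x}$ can contain many orbit points (powers of the short element), so counting generators does not by itself force $D\gtrsim\log g$ without first normalizing the local geometry. Your claim that the constants $1/40$, $(g-2)/3$, and the additive $-2$ ``arise from careful bookkeeping'' through this route is therefore unsubstantiated; as sketched, the route does not produce a bound of the stated form at all.

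For comparison, the paper sidesteps the unbounded geometry by working with a decomposition of $S$ into pieces of uniformly bounded size: Buser's trigon triangulation, with collars of short geodesics subdivided into annuli and generalized trigons of width between $\tfrac14$ and $\log 2$. It then proves (i) a length comparison $\ell_C(\overline{pq})\le 40\,\ell_S(\overline{pq})+2$ between combinatorial and hyperbolic length (this is where $40$ and the $-2$ come from), (ii) a lower bound $\ell(\Gamma')>2\pi(g-1)$ via Gauss--Bonnet and an isoperimetric estimate, forcing $\Gamma$ to meet at least $g-1$ pieces, and (iii) an exponential growth bound in the dual graph of the decomposition, which has valence at most $3$, so a combinatorial ball of radius $d$ meets at most $3\cdot 2^{d-1}+1$ pieces (this is where $\log\bigl(\tfrac{g-2}{3}\bigr)$ comes from). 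If you want to salvage your covering-space argument you would need some substitute for this bounded-geometry normalization; the fundamental-domain packing as written cannot give a bound independent of the systole and diameter of $S$.
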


The first ingredient we will need for the proof of Theorem \ref{thm:appendix} is a type of generalized triangulation of  $S$ which consists of both geodesic triangles and a type of annular generalization of a triangle called a {\bf trigon} as defined by Buser; see \cite{buser92}. 

\begin{mydef} Let $S$ be a compact Riemann surface of genus $\geq 2$. A closed domain $D \subset S$ is called a {\bf trigon} if it is a simply connected, embedded geodesic triangle or if it is a doubly connected, embedded domain, with one boundary component a smooth closed geodesic and the other boundary component two geodesic arcs as shown in Figure \ref{fig:trigon}. The closed geodesic and the two arcs are the {\bf sides} of $D$.\end{mydef} 

\begin{figure}[H]
\centering
\includegraphics[width=1.25in]{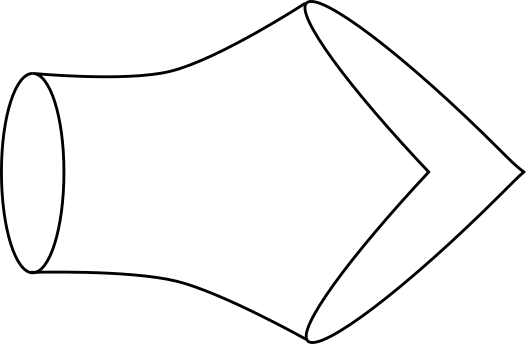}
\caption{A trigon}
\label{fig:trigon}
\end{figure}

Buser proved that $S$ admits such a triangulation into trigons of controlled size. 

\begin{thm}[Buser \cite{buser92} Theorem 4.5.2] \label{thm:buser} Any compact Riemann surface of genus $\geq 2$ admits a triangulation such that all trigons have sides of length $\leq \log 4$ and area between $0.19$ and $1.36$. Furthemore, all geodesic triangles have sides of length at least $\log(2).$\end{thm}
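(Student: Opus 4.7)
The plan is to combine Buser's triangulation (Theorem \ref{thm:buser}) with a covering argument relating the ``reach'' of $\Gamma$ in $S$ to the total area of $S$. Set $\Delta = \diam(\Gamma)$ and fix a point $x \in \Gamma$; then $\Gamma \subseteq \overline{B_S(x, \Delta)}$, and since $\mathbb{H}^2 \to S$ is a local isometry we have $\A(B_S(x, \Delta)) \le 2\pi(\cosh \Delta - 1)$.

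First I would apply Theorem \ref{thm:buser} to produce a Buser triangulation $\mathcal{T}$ of $S$ whose trigons have area at least $0.19$, sides of length at most $\log 4$, and hence diameter at most a universal constant $C_0$. The total number of trigons satisfies $N \ge \frac{4\pi(g-1)}{1.36}$. Since any trigon meeting $\Gamma$ is contained in the enlarged ball $\overline{B_S(x, \Delta + C_0)}$, if $\mathcal{T}_1$ denotes the set of trigons that meet $\Gamma$ then
\[
|\mathcal{T}_1| \le \frac{2\pi(\cosh(\Delta + C_0) - 1)}{0.19}.
\]

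The heart of the argument is to match this upper bound with a lower bound on $|\mathcal{T}_1|$ of order $g$. Two topological facts follow from the hypothesis that $S \setminus \Gamma$ is a disjoint union of disks: every annular trigon must meet $\Gamma$ (since an annulus cannot lie inside a disk), and Euler's formula applied to the cell decomposition induced by $\Gamma$ gives $\beta_1(\Gamma) \ge 2g$ and at least one complementary disk. To upgrade these to a linear-in-$g$ lower bound on $|\mathcal{T}_1|$, I would analyze the common refinement $\Gamma \cup T$, where $T$ is the $1$-skeleton of $\mathcal{T}$, using Euler's formula: each crossed triangular trigon is subdivided by $\Gamma$ into at least two faces while each uncrossed trigon contributes exactly one face, and similar splitting occurs for annular trigons. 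Combining this face-count with the Buser lower bound $\log 2$ on triangular sides, which prevents a complementary disk from absorbing too many trigons while remaining a disk, should force $|\mathcal{T}_1| \ge c(g-2)$ for an explicit constant $c$.

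Equating the upper and lower bounds on $|\mathcal{T}_1|$ yields $\cosh(\Delta + C_0) \gtrsim g$, whence $\Delta \ge \log g - O(1)$; tracking the explicit constants arising from $C_0$, the factor $\frac{1}{0.19}$, and the constant $c$ will produce the stated inequality $\frac{\log((g-2)/3) - 2}{40} \le \diam(\Gamma)$. The main obstacle is the $\Omega(g)$ lower bound on $|\mathcal{T}_1|$: a naive argument using only annular trigons yields an $O(1)$ bound at best, since a generic surface has only a constant number of short geodesics. The proof must therefore extract the correct $g$-scaling from the combinatorics of the common refinement $\Gamma \cup T$, carefully using that each trigon outside $\mathcal{T}_1$ is confined to a single complementary disk and that Buser's triangular side-length lower bound controls how many such trigons can nest into a single disk.
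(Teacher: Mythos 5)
Your proposal does not prove the statement under review. The statement is Buser's theorem itself (Theorem 4.5.2 of \cite{buser92}): the \emph{existence}, on any compact hyperbolic surface of genus $\geq 2$, of a triangulation into trigons with sides of length $\leq \log 4$, areas between $0.19$ and $1.36$, and triangle sides of length at least $\log 2$. This is an external result that the paper simply quotes; a proof would have to actually construct the triangulation --- in Buser's treatment this is done by decomposing the surface along collars of short closed geodesics (the doubly connected trigons arise exactly there), triangulating the thick part by a maximal system of points at definite pairwise distance, and verifying the side-length and area bounds by hyperbolic trigonometry. Your sketch contains none of this; instead, its very first step is ``apply Theorem \ref{thm:buser} to produce a Buser triangulation,'' so as an argument for the stated theorem it is circular, and what it actually outlines is a proof of the diameter bound of Theorem \ref{thm:appendix}, a different statement.

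Even read as a sketch of Theorem \ref{thm:appendix}, it diverges from the paper's argument and leaves its hardest step open by your own admission. The paper does not count trigons inside a metric ball: it proves a combinatorial-versus-hyperbolic length comparison (Lemma \ref{lemma:combinatorial}, $\ell_C(\overline{pq}) \leq 40\,\ell_S(\overline{pq}) + 2$), then shows via an isoperimetric estimate on the complementary polygons that $\ell(\Gamma') > 2\pi(g-1)$ (Lemma \ref{lemma:length}), so that the set $T_\Gamma$ of pieces meeting $\Gamma$ has at least $g-1$ elements, and finally uses exponential growth in the valence-$3$ dual graph (a ball of radius $d$ contains at most $3\cdot 2^{d-1}+1$ pieces) to force large combinatorial diameter of $T_\Gamma$. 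The linear-in-$g$ lower bound on the number of pieces meeting $\Gamma$ --- precisely the ``main obstacle'' you defer to an unspecified Euler-characteristic analysis of the common refinement $\Gamma \cup T$ --- is what Lemma \ref{lemma:length} supplies, and it comes from filling plus the hyperbolic isoperimetric inequality, not from face counts. So the proposal neither proves the quoted Buser theorem nor closes the key step of the theorem it does address.
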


Suppose we have a generalized triangulation $T$ of $S$ as in Theorem \ref{thm:buser}. We will extend our generalized triangulation to an even more general combinatorial model, $T'$, for $S$ in the following way. First, we note that a computation (which we omit) using equation (iii) of Theorem 2.3.1 in \cite{buser92} shows that the width (i.e.~minimal distance between non-adjacent boundary components) of a doubly connected trigon which occurs in $T$ is at least $\frac14$.  Next, consider collars of closed geodesics in $S_g$ formed by gluing together two doubly connected trigons along their closed geodesic sides as in Figure \ref{fig:collar}.  Now we divide each collar along appropriately chosen simple closed curves (each an equidistant-curve to the closed geodesic) into annuli between simple closed curves and two {\bf generalized trigons} on the ends, so that each annulus or generalized trigon has width between $\frac14$ and $\log(2) > \frac12$; see the right-hand side of Figure \ref{fig:collar}. Our combinatorial model $T'$ consists of three types of {\bf pieces}: geodesic triangles, generalized trigons, and annuli. Note that each of these pieces is of bounded size. 

\begin{figure}[H]
\centering
\includegraphics[width = 2.5in]{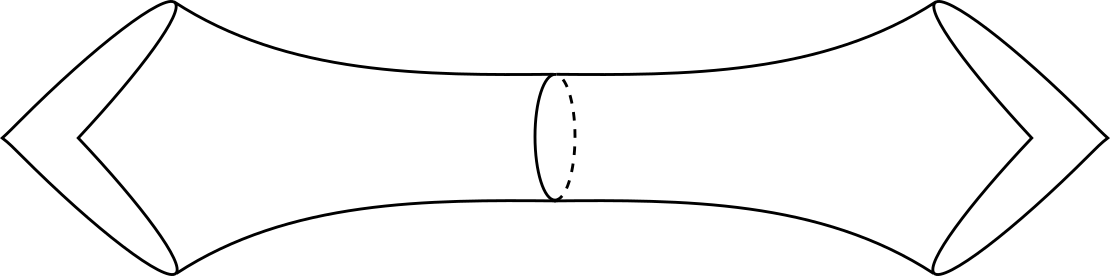}
\hspace{5mm}
\includegraphics[width = 2.5in]{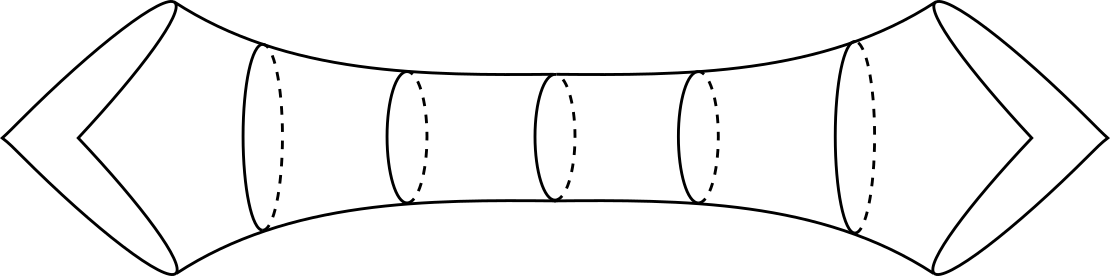}
\caption{A collar formed by two trigons}
\label{fig:collar}
\end{figure}

We can now define the combinatorial length of a geodesic between two points $p, q \in S$ in terms of our combinatorial model $T'$. For a geodesic segment $\overline{pq} \subset S$ between $p$ and $q$ we define the combinatorial length of $\overline{pq}$, denoted by $\ell_{C}(\overline{pq})$, as the minimum number of pieces of $T'$ that $\overline{pq}$ passes through. The following lemma establishes an explicit inequality between $\ell_{C}$ and the hyperbolic length $\ell_{S}$.

\begin{lemma} \label{lemma:combinatorial} Let $p, q \in S_g$, let $\overline{pq}$ be a geodesic segment between them, and let $T'$ be the extended combinatorial model of $S$ given above. Then $\ell_C(\overline{pq}) \leq 40 \cdot \ell_S(\overline{pq}) +2$. \end{lemma}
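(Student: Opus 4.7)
The plan is to bound $\ell_C(\overline{pq})$ by counting the transition edges of $T'$ crossed by the geodesic $\gamma := \overline{pq}$. Enumerate the pieces $P_0, P_1, \ldots, P_{N-1}$ traversed by $\gamma$ in order; consecutive pieces $P_i, P_{i+1}$ meet along a common edge at a transition point $x_{i+1} \in \gamma$, and $\ell_C(\gamma) = N$. Let $\gamma_i := \gamma \cap P_i$ denote the sub-arc in the $i$-th piece, so that $\sum_i \ell_S(\gamma_i) = \ell_S(\gamma)$. The task reduces to bounding $N$ linearly in $\ell_S(\gamma)$.

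My approach is to split the sub-arcs into \emph{long} ones with $\ell_S(\gamma_i) \geq \tfrac{1}{40}$ and \emph{short} ones with $\ell_S(\gamma_i) < \tfrac{1}{40}$. The long sub-arcs contribute at most $40\,\ell_S(\gamma)$ to $N$ by summing lengths. For a short sub-arc $\gamma_i$ (not at an endpoint), its entry and exit edges cannot lie on non-adjacent sides of $P_i$: the width of any piece of $T'$ is at least $\tfrac{1}{4}$, which would otherwise force $\ell_S(\gamma_i) \geq \tfrac{1}{4}$. So these two edges share a vertex $v_i$ of $T'$, and $\gamma_i$ lies in the open ball of radius $\tfrac{1}{40}$ around $v_i$.

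To count short sub-arcs, I would establish a uniform upper bound $V$ on the valence of vertices of $T'$. Buser's triangles have side lengths in $[\log 2, \log 4]$, and hyperbolic trigonometry converts these bounds into a uniform lower bound on angles, hence a uniform valence bound at triangle vertices. The vertices created inside collars when subdividing into annular pieces have valence exactly four by construction, and the interface vertices between trigons and annuli inherit bounded valence from the same estimates. Given $V$, each maximal visit of $\gamma$ to $B(v, \tfrac{1}{40})$ accounts for at most $V$ short sub-arcs, and two distinct maximal visits to $B(v, \tfrac{1}{40})$ are separated along $\gamma$ by at least $\tfrac{1}{20}$, so summed across vertices the short sub-arcs number at most roughly $20V\,\ell_S(\gamma) + V$.

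The main obstacle will be tightening this chain of estimates to produce the stated constant $40$ (the additive $+2$ being explained by the two endpoint sub-arcs at $p$ and $q$, which need not begin or end on an edge of $T'$). The crude combination above gives $(40 + 20V)\,\ell_S(\gamma) + V + 2$, which is too large. To obtain the desired constant I would replace the threshold $\tfrac{1}{40}$ by a free parameter $\epsilon$, producing a bound of the form $(1/\epsilon + 2V/\epsilon)\,\ell_S(\gamma) + V + 2$, and then choose $\epsilon$ and track $V$ sharply from the explicit data in Theorem \ref{thm:buser} and the collar construction until the leading coefficient drops to $40$. This constant-chasing is the most delicate part of the argument, and where I expect the main work to concentrate.
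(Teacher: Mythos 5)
Your proposal does not establish the lemma as stated: by your own account it yields $(40+20V)\,\ell_S(\overline{pq}) + V + 2$, and the reduction of the leading coefficient to $40$ is deferred to unspecified ``constant-chasing.'' That missing step is precisely where the content of the lemma lies, so there is a genuine gap. Moreover, several of the intermediate claims that your counting scheme rests on are shaky. First, a sub-arc of length less than $\tfrac1{40}$ whose entry and exit edges share a vertex $v$ need not lie in $B(v,\tfrac1{40})$: with the interior angle only bounded below by roughly $\pi/9$, the endpoints of a corner-cutting segment of length $\epsilon$ can sit at distance on the order of $3\epsilon$ from $v$, so the radius must grow with the angle bound and this propagates into your constants. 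Second, the vertex-ball scheme does not see all short sub-arcs: in an annulus piece of $T'$ a short sub-arc enters and leaves through the \emph{same} boundary circle, which carries no vertices of $T'$ at all, so ``the two edges share a vertex $v_i$'' fails there and those segments are simply not counted. Third, the separation claim that two maximal visits of $\gamma$ to $B(v,\tfrac1{40})$ are at least $\tfrac1{20}$ apart along $\gamma$ is unjustified on the surface: a geodesic segment can leave a small ball and return after traversing a short loop, and ruling this out needs a lower bound on the injectivity radius (or on lengths of loops based) near the vertices of $T'$, which you have not supplied and which is exactly the issue the collar/annulus pieces of $T'$ are designed to quarantine.

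For comparison, the paper avoids valence counting and ball-visit separation entirely. It classifies the sub-arcs by combinatorial position rather than by a length threshold: ``short'' means corner-cutting in a (midpoint-subdivided) triangle, re-entering an annulus through the side it entered, or cutting a corner of a generalized trigon; everything else is ``good.'' The $\pi/9$ lower bound on triangle angles shows at most $9$ consecutive short segments of the first kind, and shorts of the other two kinds cannot occur twice in a row, so at least one of every ten consecutive segments is good; a separate hyperbolic-trigonometry estimate shows every good segment has length at least $\tfrac14$. This gives $\tfrac14\cdot\tfrac{\ell_C-2}{10}\le \ell_S$, i.e.\ $\ell_C \le 40\,\ell_S + 2$, with the $+2$ coming from the two endpoint segments, and the constant $40$ falls out as $4\times 10$ with no further optimization. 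If you want to salvage your route, you would need to repair the three points above and then carry out the optimization you postponed; the paper's ``one good segment in every ten'' bookkeeping is the cleaner path to the stated constant.
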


\begin{proof}[Proof of Lemma \ref{lemma:combinatorial}] Note that $\overline{pq}$ can be subdivided into segments which each lie inside a single piece of $T'$. Our proof of Lemma \ref{lemma:combinatorial} will consist mainly of analyzing which segments of $\overline{pq}$ are {\bf short} and which are {\bf good}. We will then show that segments of $\overline{pq}$ cannot be short too many times in a row. 

There are three types of short segments we will consider, one in each of the three types of pieces. In order to define the first type, we add midpoints to each side of the geodesic triangles in $T$. A segment which has endpoints on adjacent subdivided pieces of a single geodesic triangle is called {\bf short}. The second type of short segment occurs when $\overline{pq}$ enters and exits an annulus from a single side instead of passing through the entire width of the annulus. In this situation, a segment which has both endpoints on a single boundary component of an annulus will also be considered {\bf short}. The third type of {\bf short} segment occurs when a segment without self intersections has endpoints on adjacent subdivided pieces of the geodesic boundary arcs of a generalized trigon, cutting off a corner, as shown by the blue segment in Figure \ref{fig:shortSegment}. If a segment is not short, then we will call it {\bf good}. 

\begin{figure}[H]
\centering
\includegraphics[width=1.25in]{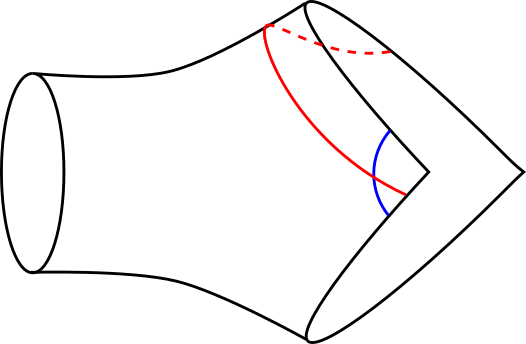}
\caption{Short (blue) and good (red) segments in a generalized trigon}
\label{fig:shortSegment}
\end{figure}

Recall the following formula for a geodesic triangle in the hyperbolic plane where $a, b, c$ are the sides of the triangle and $\alpha, \beta, \gamma$ are the respective opposite angles: \begin{equation} \label{eq:trig} \cos(\gamma) = \frac{\cosh(c) - \cosh(a)\cosh(b)}{-\sinh(a)\sinh(b)}.\end{equation}
We can find a lower bound on the length of $\gamma$ for a geodesic triangle in $T'$ by maximizing the length of $a$ and $b$ and minimizing the length of $c$. Taking $a, b = \log(4)$ and $c = \log(2)$, equation (\ref{eq:trig}) implies that $\gamma > \frac{\pi}{9}$. Thus, there is a lower bound of $\frac{\pi}{9}$ on the interior angles of the triangles in $T$. So we conclude that $\overline{pq}$ has no more than $\frac{\pi}{\nicefrac{\pi}{9}} = 9$ short segments of the first type in a row. Next, we note that there cannot be two short segments of type two or three in a row. So we can assume that for every ten adjacent segments of $\overline{pq}$, at least one of them is good.

We now establish that good segments have length at least $\frac{1}{4}$. Once again we have three types of segments to consider, the shortest possible good segments within each of our three types of pieces in $T'$. Within a geodesic triangle in $T'$ the shortest possible good segment is one that joins the midpoints of two sides of a triangle. Once again using equation (\ref{eq:trig}), we see that the length of a good segment is bounded below by \[\cosh^{-1}\left(-\sinh(\log(2))\sinh(\log(2)) \cos \left(\frac{\pi}{9}\right) + \cosh(\log(2))\cosh(\log(2))\right) \geq \frac{1}{4}.\] Within a generalized trigon the shortest possible good segment is a perpendicular segment going from the closed boundary component of the trigon to the midpoint of one of the geodesic arc boundary components. A segment of this type has length at least $\frac{1}{4}$ by our definition of $T'$. One might think that a shorter possible good segment in a generalized trigon is one passing from one geodesic arc boundary to the other as shown by the red arc in Figure \ref{fig:shortSegment}. However, this red arc has length at least half of the length of the geodesic arc boundary and so it has length at least $\frac{\log(2)}{2} > \frac14$. Lastly we have that within an annulus the shortest possible good segment is a perpendicular segment passing from one boundary component to the other, which has length at least $\frac14$ since we defined our annuli to have width at least $\frac{1}{4}$.

Thus, at worst we have that $\frac{1}{4} \cdot \frac{\ell_C(\overline{pq}) - 2}{10} \leq \ell_S(\overline{pq})$, where the $-2$ comes from the fact that the initial and terminal segments of $\overline{pq}$ can be arbitrarily short depending on where they lie within a piece of $T'$, but still add $2$ to $\ell_{C}(\overline{pq})$. So we have that $\ell_C(\overline{pq}) \leq 40 \cdot \ell_S(\overline{pq})  + 2$, as desired. \end{proof}

We now define the combinatorial distance, denoted $d_{C}$, between two points $p, q \in S_g$ as \[d_C(p,q) = \inf{\{ \ell_C(\overline{pq}) \, : \, \overline{pq} \text{ is a geodesic segment between $p$ and $q$} \} }.\] Thus, by Lemma \ref{lemma:combinatorial}, we have that $d_C(p,q) \leq 40 \cdot d_S(p,q) + 2.$

Let $T_{\Gamma}$ be the subset of $T'$ that minimally covers $\Gamma$, where a piece $t \in T'$ belongs to $T_{\Gamma}$ if $\Gamma \cap t \neq \varnothing$. We will denote by $\Gamma'$ the $1$-skeleton of $T_{\Gamma}$ together with a geodesic arc for each generalized trigon and annulus in $T'$ as shown by the dotted arc in Figure \ref{fig:gammatrigon}, which ensures $\Gamma'$ is connected. 

\begin{figure}[H]
\centering
\includegraphics[width=1.25in]{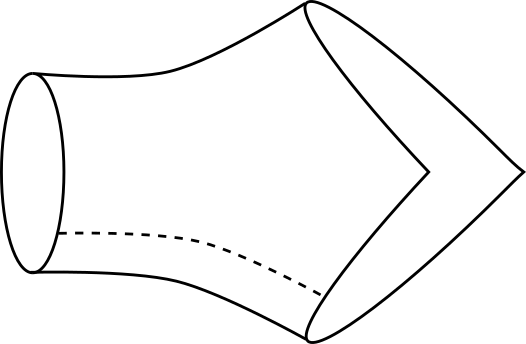}
\caption{A generalized trigon's contribution to $\Gamma'$}
\label{fig:gammatrigon}
\end{figure}

We will need one more fact relating the length of $\Gamma'$ to the genus of our surface before we continue to the proof of our main result.

\begin{lemma} \label{lemma:length} Let $\Gamma'$ be as above. Then \[\ell (\Gamma') =  \sum_{e \in \Gamma'} \ell(e) > 2 \pi (g-1).\] \end{lemma}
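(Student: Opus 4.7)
My plan is to combine the Gauss--Bonnet theorem on $S$ with the hyperbolic isoperimetric inequality, applied face by face to the cell decomposition of $S$ cut out by $\Gamma'$. Gauss--Bonnet immediately gives $\A(S) = -2\pi \chi(S) = 4\pi(g-1)$, so it suffices to prove that $\ell(\Gamma')$ exceeds one half of this area.

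The main structural observation I would establish is that $\Gamma'$ is the $1$-skeleton of a cell decomposition of $S$ every one of whose $2$-cells is a topological disk. The geodesic triangles in $T_\Gamma$ are disks already, and the geodesic arcs inserted for each generalized trigon or annulus serve precisely to cut those doubly connected pieces into disks. Finally, each component of $S \setminus T_\Gamma$ lies in a single complementary disk of $\Gamma$---since any piece of $T'$ disjoint from $\Gamma$ must sit entirely inside one such complementary disk---and one checks that each such component is itself a disk whose boundary is a cycle in $\Gamma'$. Should some component fail to be a disk, one could always subdivide it further by additional geodesic arcs, which only increases $\ell(\Gamma')$ and so cannot hurt the desired bound.

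Once the disk condition is in place, I would invoke the hyperbolic isoperimetric inequality $\ell(\partial F)^2 \geq \A(F)^2 + 4\pi\, \A(F)$ to each face $F$ (lifting to $\mathbb{H}^2$ since each face is simply connected), which gives the strict inequality $\ell(\partial F) > \A(F)$ whenever $F$ has positive area. Summing over all faces, and using that each edge of $\Gamma'$ borders at most two faces, yields
\[4\pi(g-1) \; = \; \A(S) \; = \; \sum_F \A(F) \; < \; \sum_F \ell(\partial F) \; \leq \; 2\,\ell(\Gamma'),\]
from which $\ell(\Gamma') > 2\pi(g-1)$ follows immediately.

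The principal obstacle I anticipate is the topological verification that every face of the $\Gamma'$-decomposition of $S$ genuinely is a disk, especially the components of $S \setminus T_\Gamma$ sitting inside complementary disks of $\Gamma$. Once this combinatorial check is settled, the remaining chain of inequalities is a clean application of Gauss--Bonnet together with the hyperbolic isoperimetric bound.
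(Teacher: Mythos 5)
Your proposal is correct and follows essentially the same route as the paper: both arguments cut $S_g$ along $\Gamma'$ into disk faces, note that the total area is $4\pi(g-1)$ while the boundary lengths sum to at most $2\ell(\Gamma')$, and apply the hyperbolic isoperimetric inequality face by face to get $\ell(\partial F) > \A(F)$ (the paper derives this via the area of a hyperbolic circle of radius $\sinh^{-1}\left(\frac{p}{2\pi}\right)$, you via $\ell^2 \geq \A^2 + 4\pi \A$, which is the same inequality in a different guise). The only difference is cosmetic, so no further comparison is needed.
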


\begin{proof}[Proof of Lemma \ref{lemma:length}] Note that if $\alpha$ is a simple closed curve that intersects $\Gamma$, then it must intersect $\Gamma'$. Thus, $\Gamma'$ fills $S_g$, since $\Gamma$ does. Now because $\Gamma'$ fills, it cuts $S_g$ into polygons. The sum of the lengths around these polygons is $2 \ell(\Gamma')$, while the sum of their areas is $4\pi(g-1)$. 

Now recall that the maximum area $A(p)$ enclosed by a loop of length $p$ in the hyperbolic plane is at most the area of a circle of radius $r = \sinh^{-1}\left(\frac{p}{2 \pi} \right).$ Therefore, \[A(p) \leq 4 \pi \sinh^2\left(\frac{\sinh^{-1}\left(\frac{p}{2 \pi}\right)}{2}\right) \leq 4 \pi \sinh^2 \left(\frac{1}{2} \log \left(1 + \frac{p}{\pi} \right)\right) = \frac{p^2}{p + \pi} < p.\] Applying this inequality to each of the polygons and summing, we have $4 \pi (g - 1) \leq 2\ell(\Gamma'),$ as desired \end{proof}

Lemma \ref{lemma:length} implies that $T_{\Gamma}$ contains at least $g-1$ pieces of $T'$, since each piece contributes a length of at most $3 \log(4) > 2 \pi$ to $\ell(\Gamma')$. We are now in a position to prove Theorem \ref{thm:appendix}.

\begin{proof}[Proof of Theorem \ref{thm:appendix}] Let $T'$ and $T_{\Gamma}$ be as described previously. Consider the graph $G$ in $S_g$ which is dual to $T'$, that is the vertices of $G$ each correspond to a piece of $T'$ and edges in $G$ correspond to shared boundary components. Note that each vertex of $G$ has valence at most $3$. Thus, we if we take a base piece $\Delta_0 \in T_{\Gamma} \subset T'$, then we know that at combinatorial distance $d$ from $\Delta_0$ there are at most $3 \cdot 2^{d-1} + 1$ pieces in $T'$. This is because a ball of radius $d$ in $G$ has size at most $3 \cdot 2^{d-1}$. So, unless $g-1 < 3\cdot 2^{d-1} + 1,$ there is a piece of $T_{\Gamma}$ not in this ball. Hence, the combinatorial diameter of $T_{\Gamma}$ (within $T'$) is at least $\log \left(\frac{g-2}{3}\right) < \log_2 \left(\frac{g-2}{3}\right) < \diam_{C}(T_{\Gamma})$ for $g > 5$ and we are done. \end{proof}

\bibliographystyle{plain}

\bibliography{sketchOfSomething}

\end{document}